 \newtheorem{theorem}{Theorem}[section]
\newtheorem{lemma}[theorem]{Lemma}
\theoremstyle{definition}
\newtheorem{definition}[theorem]{Definition}
\theoremstyle{remark}
\numberwithin{equation}{section}
\newcommand{\IGNORE}[1]{}
\newcommand{\ignore}[1]{}
\newcommand{\mbb}[1]{\mathbb{#1}}
\newcommand{\mb}[1]{\mathbf{#1}}
\newcommand{\mc}[1]{\mathcal{#1}}
\newcommand{\der}[2]{\frac{\partial #1}{\partial #2}}
\newcommand{\pa}{\partial}
\begin{document}

\title[Convergence of a mass-lumped finite element method]{Convergence of a mass-lumped finite element method for the Landau-Lifshitz equation}

\author{Eugenia Kim}
\address{Department
    of Mathematics, University of California, Berkeley,
    CA 94720 and  MS-B284, Los Alamos National Laboratory, Los Alamos, NM 87544}
\email{kim107@math.berkeley.edu}
\thanks{The first author was supported in part 
    by the U.S. Department of Energy, Office of Science, Office of
    Workforce Development for Teachers and Scientists, Office of
    Science Graduate Student Research (SCGSR) program. The SCGSR
    program is administered by the Oak Ridge Institute for Science and
    Education for the DOE under contract number DE-AC05-06OR23100.}

\author{Jon Wilkening}
\address{Department of Mathematics and Lawrence
    Berkeley National Laboratory, University of California, Berkeley,
    CA 94720}
\email{wilkening@berkeley.edu}
\thanks{The second author was supported
    in part by the US Department of Energy, Office of Science, Applied
    Scientific Computing Research, under award number
    DE-AC02-05CH11231.}

\subjclass[2010]{Primary 65M60 35Q60;  Secondary 78M10}



\keywords{  Landau-Lifshitz equation,  Landau-Lifshitz-Gilbert equation, micromagnetics,
  finite element methods, mass-lumped method, convergence, weak solutions}

\begin{abstract}
The dynamics of the magnetic distribution in a ferromagnetic
  material is governed by the Landau-Lifshitz equation, which
  is a nonlinear geometric dispersive equation with a nonconvex
  constraint that requires the magnetization to remain of unit
    length throughout the domain. In this article, we present a mass-lumped
  finite element method for the Landau-Lifshitz equation. This method preserves the
  nonconvex constraint at each node of the finite element mesh, and is
  energy nonincreasing. We show that the numerical solution of our
  method for the Landau-Lifshitz equation converges to a weak solution
  of the Landau-Lifshitz-Gilbert equation using a simple proof
  technique that cancels out the product of weakly convergent
  sequences. Numerical tests for both explicit and implicit
  versions of the method on a unit square with periodic
  boundary conditions are provided for structured and unstructured
  meshes.
\end{abstract}

\maketitle

Micromagnetics is the study of the behavior of ferromagnetic
  materials at sub-micron length scales, including magnetization
  reversal and hysteresis effects \cite{fidler2000micromagnetic}.
  The dynamics of the magnetic
distribution of a ferromagnetic material occupying a region
$\Omega \subset \mbb{R}^2$ or $ \mbb{R}^3$ is governed by the Landau-Lifshitz (LL) equation \cite{fidler2000micromagnetic,kruzik2006recent,garcia2007numerical}.
 The magnetization $m(x,t) : \Omega \times [
  0, T] \to \mbb{S}^2 \subset \mbb{R}^3$ satisfies
\begin{equation}\label{eq:LL}
 \begin{cases}
   \partial_t m = - m \times h - \alpha m \times ( m \times h)
   & \text { in } \Omega\\
 \der{m}{\nu} =0 & \text { on } \partial \Omega \\
 m(x,0) = m_0(x)
 \end{cases}
\end{equation}
where $\alpha$ is a dimensionless damping parameter and
$h$ is an effective field given by
\begin{equation}\label{eq:h:def}
h(m) := - \frac{\delta   \mc{E}}{\delta m} (m) = \eta \Delta m - Q(m_2 e_2 + m_3 e_3) +h_s(m)+h_e.
\end{equation}
Here $\eta$ is the exchange constant, $Q$ is an anisotropy constant,
$h_s$ is the stray field, $h_e$ is an external field, and
 $\frac{\delta   \mc{E}}{\delta m} $ is the functional derivative of the Landau-Lifshitz energy, defined by
\begin{equation}\label{eq:LLenergydef}
  \mc{E}(m) = \frac{\eta}{2} \int_\Omega |\nabla m|^2   +
  \frac{Q}{2} \int_\Omega m_2^2+ m^2_3 -
  \frac{1}{2}\int_\Omega h_s(m) \cdot m -\int_\Omega h_e \cdot m .
\end{equation}
The first term is the exchange energy, which tries to align the
magnetization locally; the second term is the anisotropy energy, which
tries to orient the magnetization in certain easy direction taken to be $e_1$; the third
term is the stray field energy, which is induced by the magnetization
distribution inside the material; and the last term is the external
field energy, which tries to align the magnetization with an external
field.  We denote the lower order terms in
(\ref{eq:h:def}) by
\begin{equation}\label{eq:h:low}
\bar h(m) := - Q(m_2 e_2 + m_3 e_3) +h_s(m)+h_e.
\end{equation}
  When considering mathematical properties
such as existence and regularity of
the solution, these terms can be considered lower order compared to
the exchange term \cite{alouges2006convergence}. They also
have fewer derivatives than the exchange term, and can be treated as lower order 
when developing numerical methods. 

The stray
field $h_s$ depends on $m$ via
$h_s = - \nabla \phi$, where the potential $\phi$ satisfies
\begin{equation}
\begin{aligned}
&\Delta \phi = \begin{cases}
 \nabla \cdot m &\text{ in } \Omega \\
0 &\text{ on } \partial \Omega \\
\end{cases}\\
&\left[ \phi\right]_{\partial \Omega} =0, \qquad
  \left[ \frac{\partial \phi}{\partial \nu}\right]_{\partial \Omega}
  =- m \cdot \nu.
\end{aligned}
\end{equation}
Here $[v]_{\partial \Omega} (x) = v(x^+) - v(x^-)$ is the jump
  in $v$ across the boundary $\pa\Omega$ from inside ($-$) to
  outside ($+$); see \cite{garcia2007numerical}.

There are several equivalent forms of the Landau-Lifshitz (LL) equation.
 The following is the Landau-Lifshitz-Gilbert (LLG) equation :
\begin{equation} \label{eq:LLG}
\begin{aligned}
        \partial_t m- \alpha m \times       \partial_t m  = -(1+\alpha^2) (m \times h).
\end{aligned}
\end{equation}
Also, the equation
\begin{equation} \label{eq:LLGdifferent}
\begin{aligned}
\alpha       \partial_t m +  m \times       \partial_t m  = (1+\alpha^2) (h - (h \cdot m) m)
\end{aligned}
\end{equation}
is equivalent to LL and LLG; see \cite{alouges2006convergence}.
If only the gyromagnetic term is present in equation (\ref{eq:LL}),
i.e.~if $   \partial_t m= - m \times \Delta m$, it is called a
Schr\"{o}dinger map into $\mathbb{S}^2$ \cite{gustafson2010asymptotic}.
This is a geometric
generalization of the linear Schr\"{o}dinger equation. If only the
damping term is present, i.e.~if $   \partial_t m = -m \times( m \times \Delta
m)$, it is called a harmonic map heat flow into $\mathbb{S}^2$ \cite{gustafson2010asymptotic}.

In 1935, Landau and Lifshitz \cite{landau1935theory} calculated the
structure of the domain walls between antiparallel domains, which
started the theory of micromagnetics. The theory was further developed
by W. F. Brown Jr in \cite{brown1962magnetostatic}.  Applications of
micromagnetics include magnetic sensor technology, magnetic recording,
and magnetic storage devices such as hard drives and magnetic
memory (MRAM) \cite{fidler2000micromagnetic}.

Finite difference methods for micromagnetics can be derived in
  two different ways \cite{miltat2007numerical}.  The first is
a field-based approach in which the effective field $h$
  is discretized directly, and the other is an energy-based
approach in which the effective field is derived from the
discretized energy. Finite element methods can also be derived
  in a number of ways. In \cite{schrefl1999finite}, the Landau-Lifshitz-Gilbert equation (\ref{eq:LLG}) is used to 
obtain a discrete system by approximating the magnetization by
piecewise linear function on a finite element mesh and then applying
time integration in the resulting system of ODEs. In
\cite{fidler2000micromagnetic}, the effective field $h$ is
calculated by taking a functional derivative of the discretized
energy, where the magnetization in the energy formula is approximated by
piecewise linear functions. Extensive work has also been
  done developing time stepping schemes for micromagnetics
\cite{cimrak2007survey,garcia2007numerical}. In
\cite{jiang2001hysteresis}, semi-analytic integration in time was
introduced, which is explicit and first order in time and allows
stepsize control. In
\cite{krishnaprasad2001cayley,lewis2003geometric}, geometric
integration methods were applied to the Landau-Lifshitz equation. In
\cite{wang2001gauss,garcia2003improved}, a Gauss-Seidel projection
method was developed that treats the gyromagnetic term and damping
term separately to overcome the difficulty of the stiffness and the
nonlinearity of the equation.

However, relatively little work has been done deriving error
  estimates or establishing rigorous convergence results for weak
  solutions.  In a series of papers
\cite{alouges2006convergence,alouges2008new,alouges2012convergent},
Alouges and various co-authors introduced a convergent finite element
method based on equation (\ref{eq:LLGdifferent}), an equivalent
  form of the Landau-Lifshitz equation, that is first order in time.
The idea is to use a tangent plane formulation at each timestep, where
the velocity vector lies in the finite element space perpendicular to
the magnetization at each node. One advantage of this method is that
at each step, only a linear system has to be solved, although the
Landau-Lifshitz-Gilbert equation is nonlinear.  More recently, they
developed a formally second order in time scheme
\cite{kritsikis2014beyond, alouges2014convergent} that performs better
than first order in practice, though not fully at second order.
Another finite element scheme was introduced by Bartels and Prohl in
\cite{bartels2006convergence} based on the Landau-Lifshitz-Gilbert
equation, which is an implicit, unconditionally stable method, but
involves solving nonlinear equations at each timestep.  This
  method is second order in time; however, there is still a time step
  constraint, namely that $k/h^2$ remain bounded, to guarantee
  the existence of the solutions of the nonlinear systems.  Cimr{\'a}k
  \cite{cimrak2009convergence} introduced a finite element method
  based on the Landau-Lifshitz equation, which is an implicit,
  unconditionally stable method, but also has nonlinear inner
  iterations. We note that the Backward Euler method and
  higher-order diagonally implicit Runge-Kutta (DIRK) methods
  \cite{hairer:II} generally involve solving nonlinear equations at
  each internal Runge-Kutta stage when applied to nonlinear PDEs.

In this article, we introduce a family of mass-lumped 
finite element methods for the Landau-Lifshitz equation.
The implicit version is similar in computational complexity to
  the algorithms in \cite{alouges2006convergence,alouges2008new,alouges2012convergent}
  in that each timestep involves solving a large sparse linear
  system. The explicit method is more efficient than the explicit
  version of \cite{alouges2006convergence,alouges2008new,alouges2012convergent} as it
  is completely explicit --- it does not even require that a linear
  system involving a mass matrix be solved as the
 effective mass matrix is diagonal.
   The method
 involves finding the velocity vector in the tangent
plane of the magnetization by discretizing the Landau-Lifshitz
equation instead of the Landau-Lifshitz-Gilbert equation, as
was done in
\cite{alouges2006convergence,alouges2008new,alouges2012convergent,
  kritsikis2014beyond, alouges2014convergent}.  By building a
numerical scheme based on the Landau-Lifshitz equation instead
of the Landau-Lifshitz-Gilbert equation, we can naturally apply
the scheme to  limiting cases such as the
Schr\"{o}dinger map or harmonic map heat flow
 \cite{bartels2007constraint,de2009numerical, gustafson2010asymptotic, komineas2007rotating,komineas2015skyrmion}.
The main result of the paper is a proof
that the numerical solution of our scheme for the
  Landau-Lifshitz equation converges to a weak solution of the
Landau-Lifshitz-Gilbert equation, using a simple technique that
cancels out the product of two weakly convergent sequences. Our proof
builds on tools developed in \cite{alouges2012convergent}.
For simplicity,
  we defer the treatment of the stray field to future work. This
  term poses computational challenges
   \cite{blue1991using,fredkin1990hybrid,yuan1992fast,brunotte1992finite,tsukerman1998multigrid,popovic2005applications,long2006fast,garcia2006adaptive, livshitz2009nonuniform,exl2012fast,abert2012fast,abert2013numerical},
   but does not affect the
  convergence results since it is a lower order term in comparison
  to the exchange term; see \cite{alouges2012convergent}.

The paper is organized as follows. In section \ref{sec:mesh}, we
introduce a finite element mesh and review the weak formulation of the
Landau-Lifshitz-Gilbert equation. In section \ref{sec:scheme}, the
main algorithm and the main theorem will be introduced. In section
\ref{sec:numerical}, we conduct a numerical test for the equation $h=
\Delta m$ on the unit square with periodic boundary conditions, where
an exact analytical solution is known from \cite{fuwa2012finite}.
In section \ref{sec:pf}, the proof of the main theorem will be
presented.

\section{Weak solutions, meshes and the finite element space}
\label{sec:mesh}
Let us denote $\Omega_T= \Omega \times (0,T)$.
The definition of a weak solution of the Landau-Lifshitz-Gilbert
equation is given by

\vspace*{5pt}
\begin{definition} \label{def:weak}
 Let $m_0(x) \in H^1(\Omega)^3$ with $|m_0(x)| = 1$ $a.e.$ Then $m$ is
 a weak solution of (\ref{eq:LLG}) if for all $T>0$,
\begin{itemize}
\item[(i)] $m(x,t) \in H^1(\Omega_T)^3$, $|m(x,t)|=1$ $a.e.$,
\item[(ii)] $m(x, 0)= m_0(x)$ in the trace sense,
\item[(iii)] $m$ satisfies
  \end{itemize}
\begin{align}\label{eq:defLLG}
  \int_{\Omega_T} &\partial_t m \cdot \phi -
  \alpha \int_{\Omega_T} (m \times\partial_t m )\cdot \phi\\ \notag
  &= (1+\alpha^2) \eta\sum_{l=1}^d\int_{\Omega_T}
  (m \times \partial_{x_l} m) \cdot \partial_{x_l} \phi   - (1+\alpha^2) \sum_{l=1}^d\int_{\Omega_T}
  (m \times \bar h(m)) \cdot  \phi .
\end{align}
for all $\phi \in  H^1(\Omega_T)^3$.
\begin{itemize}
\item[(iv)]  $m$ satisfies an energy inequality
\begin{equation} \label{eq:energyiinequalitydef}
  C  \int_{\Omega_T}  |\partial_t m|^2  +   \mc{E}(m(x,T)) \leq  \mc{E}(m(x,0)).
  \end{equation}
for some constant $C>0$, where the energy $\mc{E}(m)$ is defined in equation (\ref{eq:LLenergydef}).
\end{itemize}
\end{definition}

\vspace*{5pt}
\noindent
In \cite{alouges2006convergence,bartels2006convergence}, the value of
$C$ in ($iv$) is taken to be $C=\frac{\alpha}{1+\alpha^2}$.
The existence of global
weak solution of the Landau-Lifshitz equation in $\Omega \subset
\mathbb{R}^3$ into $\mathbb{S}^2$ was proved in
\cite{alouges1992global,guo1993landau}. The nonuniqueness of weak
solution was proved in \cite{alouges1992global}. 

Let the domain $\Omega \subset \mbb{R}^d$ where $d=2$ or $3$
be discretized into triangular or tetrahedral
elements $\{ \mathcal{T}_h\}_h$ of mesh size at most $h$ with vertices
$(x_i)_{i=1}^N$.  Let the family of partitions $\mathcal{T}= \{
\mathcal{T}_h\}_h $ be admissible, shape regular and uniform \cite{braess2007finite}.
 Let
$\{\phi_i\}_{1 \leq i \leq N}$ be piecewise linear nodal basis
functions for $\mathcal{T}$, such that $\phi_i(x_j)=\delta_{ij}$,
where $\delta_{ij}$ is a Kronecker delta function. We will consider a
vector-valued finite element space $F^h$ defined by
$F^h = \{ w^h \; | \; w^h(x) = \sum_{i=1}^N w^h_i \phi_i(x), \; w^h_i \in \mathbb{R}^3 \}.$
%
The discrete magnetization $m^h$ is required to belong to the
submanifold $M^h$ of $F^h$ defined by
$M^h = \{ m^h \in F^h \; | \; m^h(x)= \sum_{i=1}^N m^h_i \phi_i(x), \; |m^h_i|=1 \}.$
We define the interpolation operator $I_h: C^0(\Omega,
\mathbb{R}^3) \to F^h $ by
\begin{equation}\label{eq:interpolation}
I_h(m)=\sum_{i=1}^N m(x_i) \phi_i(x).
\end{equation} 
We need the following
additional conditions for our finite element method: There exist some
constants $C_1, C_2, C_3, C_4>0$ such that
\begin{equation}\label{eq:mesh}
\begin{aligned}
& C_1h^d \leq b_i = \int_\Omega \phi_i  \leq C_2 h^d, \\
& \left|M_{ij} \right| = \left|\int_\Omega \phi_i \phi_j \right| \leq C_3 h^d, \\
& \left|\partial_{x_l} \phi_i \right| \leq \frac{C_4}{h}\\
&\ \int_\Omega \nabla\phi_i \cdot \nabla\phi_j  \leq 0, \quad \text{ for } i \neq j,
\end{aligned}
\end{equation}
for all $h>0$, $i, j=1, \dots, N$ and $l=1, \dots, d$.

\section{The finite element scheme, the algorithm, and the main theorem}
\label{sec:scheme}
To illustrate how we obtain Algorithm 1 below, consider the simple
case in which only the exchange energy term is present in the
  effective field, i.e.~$h= \eta \triangle m$ from
(\ref{eq:h:def}).
Let's first
consider the weak form of the Landau-Lifshitz equation with $h=\eta
\triangle m$,
\begin{equation}
\begin{aligned}\label{eq:defLL}
  \int_{\Omega_T} \partial_t m \cdot w& =  \eta \sum_{l=1}^d\int_{\Omega_T}
  (m \times \partial_{x_l} m) \cdot \partial_{x_l} w \\
&- \alpha \eta \sum_{l=1}^d  \int_{\Omega_T}   \partial_{x_l} m \cdot  \partial_{x_l} w 
 +\alpha \eta \sum_{l=1}^d \int_{\Omega_T} (  \partial_{x_l}  m \cdot  \partial_{x_l}  m)(m \cdot w)  .
 \end{aligned}
 \end{equation}
Taking this weak form
as a hint, we would like to find $v =
\sum_{j=1}^N v_j \phi_j \in F^h$ such that
\begin{equation} \label{eq:approxLL}
\begin{aligned}
&  \int_\Omega \sum_{j=1}^N v_j \phi_j \cdot  w_i \phi_i  =\eta \sum_{l=1}^d \sum_{j=1}^N \int_\Omega
  (m_i \times m_j \partial_{x_l} \phi_j ) \cdot  \partial_{x_l} \phi_i \; w_i \\
  &\hspace{0.5cm} - \alpha\eta \sum_{l=1}^d \sum_{j=1}^N \int_\Omega m_j \partial_{x_l}\phi_j \cdot  \partial_{x_l} \phi_i w_i 
   + \alpha\eta\sum_{l=1}^d \sum_{j=1}^N  \int_\Omega ( \partial_{x_l} \phi_j m_j \cdot m_i)(m_i \cdot  w_i\partial_{x_l} \phi_i )
\end{aligned}
\end{equation}
for $i=1, \dots, N$, where $m = \sum_{j=1}^N m_j \phi_j(x) \in M^h$,
$w \in (C^\infty(\Omega))^3$ and $w_i =I_h(w)(x_i)= w(x_i)$. Then, with $w_i$ as
$(1,0,0)$, $(0,1,0)$ or $(0,0,1)$ in equation (\ref{eq:approxLL}), we
obtain
\begin{equation} \label{eq:Mv}
\begin{aligned}
(\mathbf{M} v)_i = \eta\; m_i \times (\mathbf{A}m)_i + \alpha \eta \; m_i \times (m_i \times (\mathbf{A}m)_i)
\end{aligned}
\end{equation}
for $i=1, \dots, N$, where 
$\mathbf{M}= \begin{bmatrix} M & 0 & 0 \\ 0 & M & 0 \\ 0 & 0 & M\end{bmatrix}$
and
$\mathbf{A}= \begin{bmatrix} A & 0 & 0 \\ 0 & A & 0 \\ 0 & 0 & A\end{bmatrix}$
are  $3N \times 3N$  block diagonal matrices with each block $M$ and $A$ a mass or stiffness matrix,
  i.e.~$M_{ij}= \int_\Omega \phi_i \phi_j $, and
 $A_{ij} = \sum_{l=1}^d \int_\Omega \partial_{x_l}
    \phi_i \partial_{x_l} \phi_j $. 
    Note that $m_i \cdot (\mathbf{M} v)_i=0$,
    so approximating $v$ by $\hat{v} = \frac{\mb{M} v}{b}$ yields a tangent vector to the constraint manifold $M^h$, where $b_i =
\int_\Omega \phi_i $.
    The left hand side of (\ref{eq:Mv}) is then $b_i \hat{v}_i$ which is a mass-lumping approximation.
    This suggests the following algorithm.

    \vspace*{5pt}
\textbf{Algorithm 1.} For a given time $\bar T>0$, set $J=[\frac{\bar T}{k}]$.
\begin{enumerate} \label{algorithm}
\item Set an initial discrete magnetization $m^0$ at the nodes
  of the finite element mesh described in \S~\ref{sec:mesh} above.
\item For $j=0, \dots, J$,
\item [  a.]  compute a velocity vector $\hat{v}^j_i$ at each node by
\begin{equation} \label{eq:algorithm}
\begin{aligned}
  \hat{v}^j_i = \frac{(\mathbf{M}v^j)_i}{b_i}  = \frac{  \eta \;m^j_i
    \times (\mathbf{A}m+\theta k \mathbf{A} \hat v)_i^j + \alpha \eta \;
    m^j_i \times (m_i^j \times (\mathbf{A}m+\theta k \mathbf{A} \hat v)_i^j)}{b_i} \\
  - \frac{ m^j_i \times (\mathbf{M} \bar h (m)+\theta k \mathbf{M} \bar h (\hat v))^j_i
    + \alpha m^j_i \times (m^j_i \times (\mathbf{M}\bar h (m) +
    \theta k \mathbf{M} \bar h (\hat v))_i^j)}{b_i} .
\end{aligned}
\end{equation}
for $\theta \in [0,1]$ and for $i=1, \dots, N$.  
\item[ b.] Compute $m_i^{j+1} = \frac{m_i^j + k \hat v_i^j}{|m_i^j + k
  \hat v_i^j|}$ for $i=1, \dots, N$.
\end{enumerate}

We define the
time-interpolated magnetization and velocity as in
\cite{alouges2012convergent}:
\begin{definition} \label{def:main}
 For $(x,t) \in \Omega \times [jk,(j+1)k) \subset\Omega \times [0,T)$, where $T= Jk$, define
\begin{align*}
& m^{h,k}(x,t) = m^j(x),\\
&\bar m^{h,k}(x,t) = \frac{t-jk}{k} m^{j+1}(x) + \frac{(j+1)k-t}{k} m^j(x),\\
&\hat v^{h,k}(x,t) = \hat v^j(x), \\
& v^{h,k}(x,t) = v^j(x).
\end{align*}
\end{definition}
The main theorem in this article is the following theorem,
which is proved in section \ref{sec:pf}.
\begin{theorem}\label{thm:main}
Let $m_0\in H^1(\Omega, \mathbb{S}^2)$ and suppose $m_0^h \to m_0$ in
$H^1(\Omega)$ as $h \to 0$.
 Let $\theta \in [0,1]$, and for $0 \leq \theta <\frac{1}{2}$,  assume  that $\frac{k}{h^2} \leq C_0$, for some $C_0>0$. 
  If the triangulation $\mathcal{T}= \{
\mathcal{T}_h\}_h $ satisfies condition (\ref{eq:mesh}), then the
sequence $\{m^{h,k}\}$, constructed by Algorithm 1 and definition \ref{def:main}, has a subsequence that converges weakly to a
weak solution of the Landau-Lifshitz equation.
\end{theorem}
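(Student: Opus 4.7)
The plan is to adapt the Alouges framework \cite{alouges2012convergent}: derive uniform a priori bounds from a discrete energy inequality, extract convergent subsequences via Aubin--Lions, and pass to the limit in a reformulated discrete equation that encodes the LLG structure. Two structural properties of Algorithm 1 will be used repeatedly: the node-wise orthogonality $\hat v_i^j \cdot m_i^j = 0$ (because $(\mathbf{M} v)_i$ is a cross product with $m_i^j$), and the unit-length preservation from step b.

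\textbf{Step 1: Discrete energy estimate.} I would test the discrete equation (\ref{eq:algorithm}) against $b_i \hat v_i^j$ and sum over $i$. The gyromagnetic term drops out by $\hat v_i^j \cdot (m_i^j \times X) = 0$ whenever $X \parallel $ something involving $m_i^j$, while the damping term yields $-\alpha\eta \hat v_i^j \cdot (\mathbf{A}(m^j + \theta k \hat v^j))_i$. Combining this with the geometric identity $|\nabla I_h(P(m^j + k\hat v^j))|^2 \leq |\nabla(m^j + k\hat v^j)|^2$, where $P$ is pointwise projection onto $\mathbb{S}^2$ (which is a contraction on the $H^1$ semi-norm at nodes), yields
\[
\mc{E}(m^{j+1}) + c_\alpha k \sum_i b_i |\hat v_i^j|^2 \leq \mc{E}(m^j) + (\theta - \tfrac{1}{2}) k^2 \langle \mathbf{A} \hat v^j, \hat v^j\rangle + \text{l.o.t.}
\]
The extra term is nonpositive for $\theta \geq 1/2$, and for $\theta < 1/2$ it is absorbed using the inverse inequality $\langle \mathbf{A}\hat v^j,\hat v^j\rangle \leq C h^{-2}\|\hat v^j\|^2$ together with the CFL assumption $k/h^2 \leq C_0$. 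Telescoping in $j$ delivers the uniform bounds $\|\hat v^{h,k}\|_{L^2(\Omega_T)}^2 + \|\nabla m^{h,k}\|_{L^\infty(0,T;L^2)}^2 \leq C\mc{E}(m_0)$.

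\textbf{Step 2: Compactness.} From step b and a Taylor expansion of $1/|m_i^j + k\hat v_i^j|$ one obtains $\partial_t \bar m^{h,k} = \hat v^{h,k} + O(k|\hat v^{h,k}|^2)$ on each time slab, so $\bar m^{h,k}$ is bounded in $H^1(\Omega_T)$ and in $L^\infty(0,T;H^1(\Omega))$. The Aubin--Lions lemma gives a subsequence with $\bar m^{h,k} \to m$ strongly in $L^2(\Omega_T)$ and pointwise a.e.; the same limit holds for $m^{h,k}$ because $\|\bar m^{h,k} - m^{h,k}\|_{L^2}^2 \leq k^2 \|\hat v^{h,k}\|_{L^2}^2 = O(k^2)$. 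Nodal unit-length and the interpolation bounds in (\ref{eq:mesh}) force $|m|=1$ a.e. Finally $\hat v^{h,k} \rightharpoonup \partial_t m$ weakly in $L^2(\Omega_T)$.

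\textbf{Step 3: Passage to the limit.} Given smooth $\phi$, I would use $w_i = I_h\phi(x_i)$ and also $w_i = m_i^j \times I_h\phi(x_i)$ as test vectors in (\ref{eq:algorithm}). Using $a \times (a \times b) = (a \cdot b)a - b$ when $|a|=1$ together with $\hat v_i^j \cdot m_i^j = 0$, the two tested equations combine into the discrete analogue of (\ref{eq:defLLG}), i.e.\ an LLG weak form with the gyromagnetic term $(m \times \partial_{x_l} m)\cdot \partial_{x_l}\phi$ and the damping term $(m \times \partial_t m)\cdot\phi$. The crucial nonlinearities $m^{h,k}\times\partial_{x_l}m^{h,k}$ and $m^{h,k}\times \hat v^{h,k}$ are products of a strongly convergent factor and a weakly convergent factor, so they pass to the limit; this is the ``cancellation'' of weak$\times$weak advertised in the introduction. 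Mass-lumping errors $|\sum_i b_i u_i w_i - \int_\Omega I_h u \cdot I_h w| = O(h)\|u^h\|_{H^1}\|w^h\|_{H^1}$, the $\theta k\mathbf{A}\hat v$ and $\theta k \mathbf{M}\bar h(\hat v)$ corrections (killed by $k\|\hat v\|_{L^2}$ bounds and an inverse estimate), and the $O(k^2|\hat v|^2)$ projection defect all vanish in the limit. The initial condition transfers from the hypothesis $m_0^h \to m_0$ in $H^1$ and the energy inequality (\ref{eq:energyiinequalitydef}) follows by taking $\liminf$ in the telescoped estimate of step 1.

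The main obstacle is step 3: tracking how the mass-lumping quadrature, the non-linear projection in step b, and the $\theta k$ implicit corrections interact with weak and strong limits, so that every discretization artefact is controllable by the energy bound from step 1 and the remaining cross-product nonlinearities always present one strongly convergent factor.
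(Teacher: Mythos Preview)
Your overall architecture (discrete energy estimate $\to$ compactness $\to$ pass to the limit in a combined LLG form) matches the paper's Section~\ref{sec:pf} exactly, and Step~3 correctly isolates the key idea: testing (\ref{eq:algorithm}) once with $w_i=I_h\phi(x_i)$ and once with $w_i=m_i^j\times I_h\phi(x_i)$ and combining with coefficient $\alpha$ produces the discrete LLG identity (\ref{eq:approxLLx3}), in which every surviving nonlinearity is a product of a strongly convergent factor and a weakly convergent one. The paper does precisely this in \S\ref{subsection:pf}--\S\ref{subsection:detail}.

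There is, however, a genuine slip in Step~1. Testing (\ref{eq:algorithm}) against $b_i\hat v_i^j$ does \emph{not} kill the gyromagnetic term: with $X_i=\eta(\mathbf{A}(m^j+\theta k\hat v^j))_i-(\mathbf{M}\bar h)_i$ one has $b_i|\hat v_i^j|^2=\hat v_i^j\cdot(m_i^j\times X_i)-\alpha\,\hat v_i^j\cdot X_i$, and the first term $\hat v_i^j\cdot(m_i^j\times X_i)=-(m_i^j\times\hat v_i^j)\cdot X_i$ neither vanishes nor is controllable by the available bounds (it contains $\mathbf{A}m$). Your justification ``$\hat v_i^j\cdot(m_i^j\times X)=0$ whenever $X\parallel$ something involving $m_i^j$'' does not apply here since $X_i$ has no reason to be parallel to $m_i^j$. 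The remedy is the same combination you already use in Step~3: form the discrete LLG identity $b_i\hat v_i^j-\alpha\,b_i\,m_i^j\times\hat v_i^j=(1+\alpha^2)\,m_i^j\times X_i$ and dot with $m_i^j\times\hat v_i^j$; using $|m_i^j\times\hat v_i^j|=|\hat v_i^j|$ and $m_i^j\cdot\hat v_i^j=0$ this yields $\hat v_i^j\cdot X_i=-\tfrac{\alpha}{1+\alpha^2}b_i|\hat v_i^j|^2$, which is exactly (\ref{eq:innerproduct}) and feeds into (\ref{eq:energyinequality}). This is what the paper does in \S\ref{subsec:energy} by testing (\ref{eq:approxLLx3}) with $w_i=m_i\times\hat v_i$. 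Once Step~1 is corrected in this way, your proposal coincides with the paper's proof.
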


\section{Numerical Results}\label{sec:numerical}

Before proving Theorem \ref{thm:main}, we demonstrate the effectiveness of the scheme on a test problem.
We conduct a numerical experiment for the Landau-Lifshitz equation
  (\ref{eq:LL}) with effective field involving only the exchange energy term, with
  $h= \Delta m$ in equation (\ref{eq:h:def}), on the unit square with
  periodic boundary conditions. This corresponds to setting $\eta = 1$
  and $\bar h =0$ in equation (\ref{eq:algorithm}) in Algorithm 1.
For the convergence study, we used an explicit method $(\theta=0)$ and
an implicit method $(\theta =0.5) $ on a structured and unstructured
mesh.  The unstructured mesh with point and line sources, which is an arbitrary mesh,
was generated 
using DistMesh \cite{persson2004simple}, with
an example shown in figure \ref{figure:unstructured}.

\begin{figure}[t]
  \begin{center}
   \includegraphics[width=.6\linewidth,trim=20 5 50 10,clip]{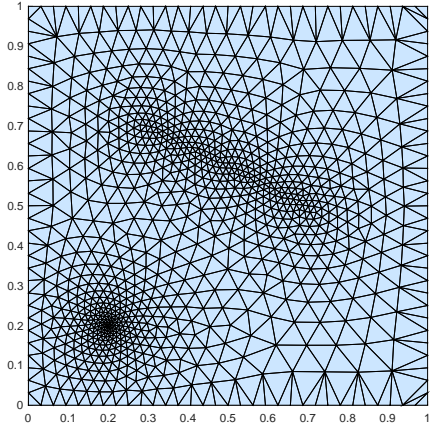}
  \end{center}
  \caption{\label{figure:unstructured}
    Unstructured mesh with point and line sources, with $h=1/32$.}
\end{figure}
 The $L^\infty$
and $L^2$ errors were measured relative to an exact solution for the Landau-Lifshitz
 equation with $h=  \Delta m$ 
from \cite{fuwa2012finite}, namely
\begin{equation} \label{eq:anal}
\begin{aligned}
&m^x(x_1,x_2,t) =\frac{1}{d(t)} \sin \beta \cos(k(x_1+x_2)+g(t)), \\
&m^y(x_1,x_2,t) =\frac{1}{d(t)} \sin \beta \sin(k(x_1+x_2)+g(t)), \\
&m^z(x_1,x_2,t) =\frac{1}{d(t)} e^{2 k^2 \alpha t} \cos \beta.
\end{aligned}
\end{equation}
Here $\beta = \frac{\pi}{24}$, $k=2\pi$, $d(t) = \sqrt{\sin^2\beta +
  e^{4k^2\alpha t} \cos^2 \beta}$ and $g(t)=\frac{1}{\alpha}
\log(\frac{d(t)+e^{2k^2 \alpha t} \cos \beta}{1+\cos \beta})$.  The
numerical results are summarized in the tables
\ref{table:structured:explicit} and
\ref{table:structured:implicit}.
Figure \ref{figure:l2} shows the convergence rate of the methods, which
is first order in the time step $k$ and second order in the
mesh size $h$.

\begin{figure}[h]
   \includegraphics[width=.495\linewidth,trim=20 5 50 10,clip]{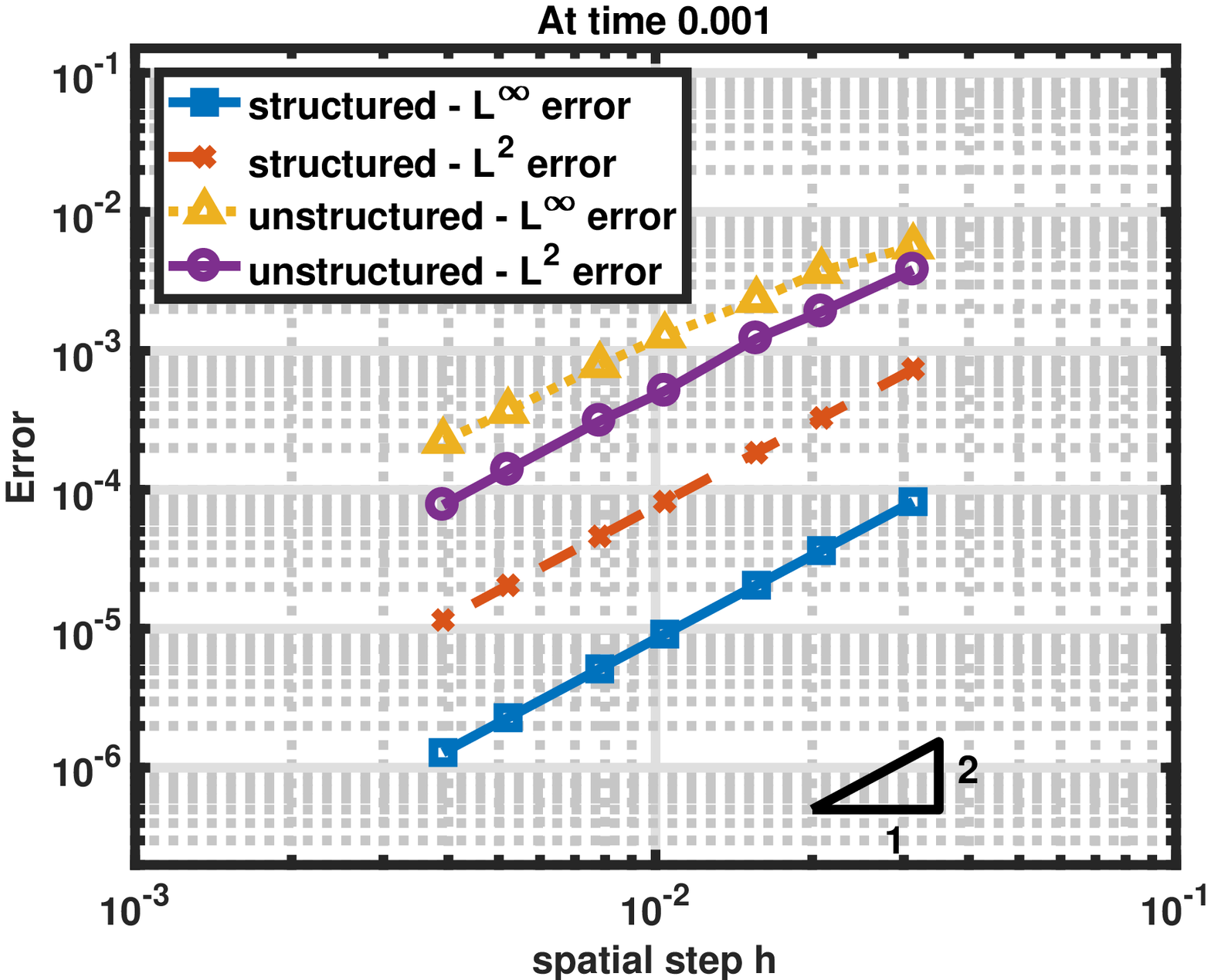}
   \hfill
   \includegraphics[width=.495\linewidth,trim=20 5 50 10,clip]{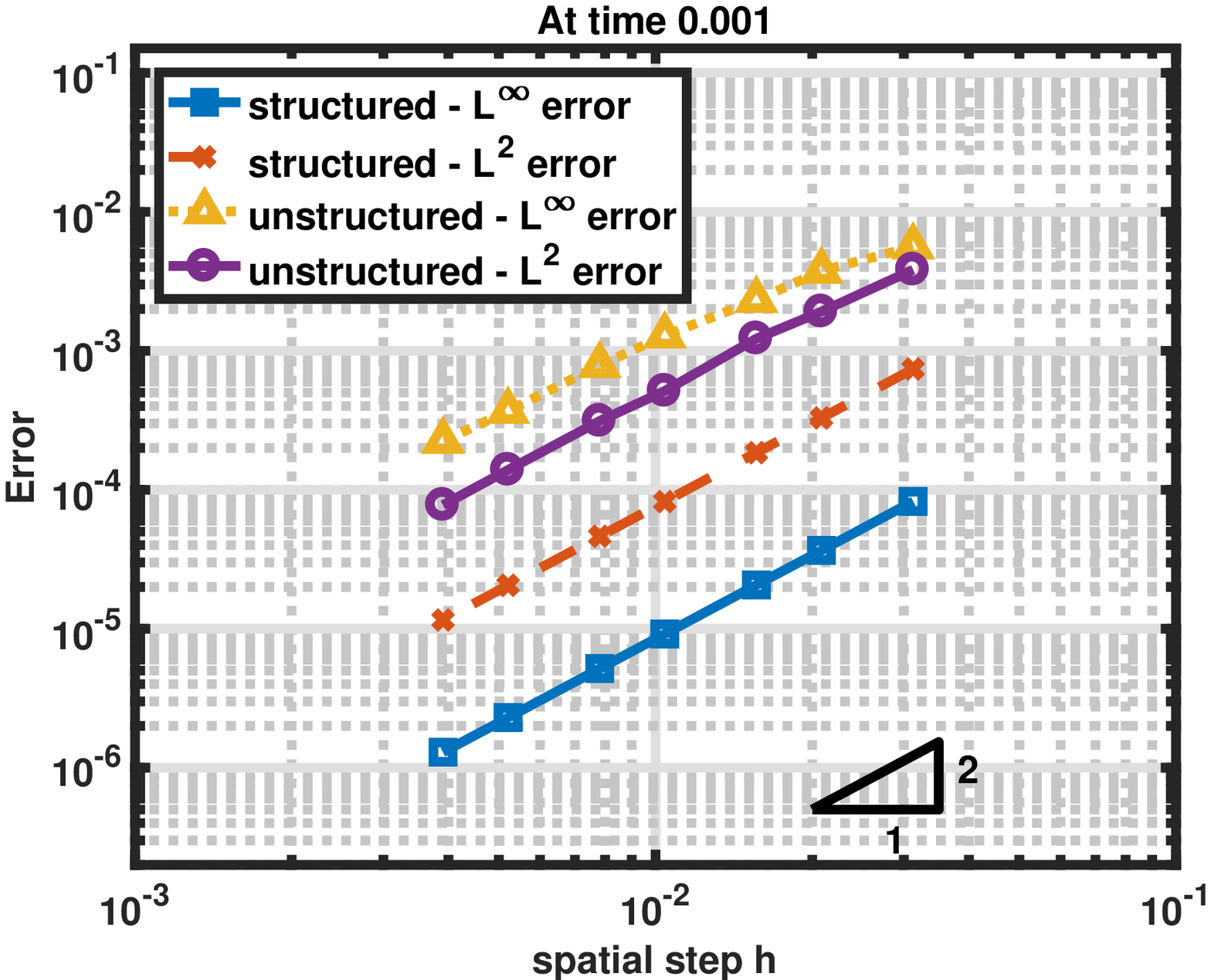}
 \caption{Convergence plot, Left :  Explicit method, Right: Implicit method. }
  \label{figure:l2}
\end{figure}

 \begin{table}[h]
\centering
 \begin{tabular}{|c| c| c|c|c|c| c|c|c|}
   \hline
 & \multicolumn{4}{|c|}{Structured mesh}  & \multicolumn{4}{|c|}{Unstructured mesh}\\  
 \hline
 $\frac{1}{h}$ 	&  \scalebox{.8}[1.0]{$|| m-m^h||_{L^\infty}$} & rate & \scalebox{.8}[1.0]{$|| m-m^h||_{L^2}$}& rate  & \scalebox{.8}[1.0] {$|| m-m^h||_{L^\infty}$} & rate  & \scalebox{.8}[1.0]{$|| m-m^h||_{L^2}$}& rate  \\
\hline
32	&	8.22e-05	&	 2.00	&	7.40e-04	&	 2.00	&	5.61e-03	&	 1.28	&	3.83e-03	&	 1.65	 \\ 
64	&	2.06e-05	&	 2.00	&	1.85e-04	&	 2.00	&	2.32e-03	&	 1.56	&	1.22e-03	&	 1.97	 \\ 
128	&	5.15e-06	&	 2.00	&	4.63e-05	&	 2.00	&	7.87e-04	&	 1.81	&	3.13e-04	&	 2.01	 \\ 
256	&	1.29e-06	&	     	&	1.16e-05	&	     	&	2.25e-04	&	     	&	7.77e-05	&	     	 \\ 
 \hline
  \end{tabular}
 \caption{{\bf Explicit method ($\theta=0$) }: $L^\infty$ and $L^2$
   error and convergence rates on a structured and unstructured mesh with spatial step
   $h$, time step $k=8\cdot10^{-5} h^2$ and time $0.001$.
 }
   \label{table:structured:explicit}
\end{table}

\begin{table}[h]
\centering
 \begin{tabular}{|c| c| c|c|c|c| c|c|c|}
   \hline
 & \multicolumn{4}{|c|}{Structured mesh}  & \multicolumn{4}{|c|}{Unstructured mesh}\\  
 \hline
 $\frac{1}{h}$ 	&  \scalebox{.8}[1.0]{$|| m-m^h||_{L^\infty}$} & rate & \scalebox{.8}[1.0]{$|| m-m^h||_{L^2}$}& rate  & \scalebox{.8}[1.0] {$|| m-m^h||_{L^\infty}$} & rate  & \scalebox{.8}[1.0]{$|| m-m^h||_{L^2}$}& rate  \\
\hline
32	&	8.26e-05	&	 2.00	&	7.40e-04	&	 2.00	&	5.61e-03	&	 1.28	&	3.83e-03	&	 1.65	 \\ 
64	&	2.07e-05	&	 2.00	&	1.85e-04	&	 2.00	&	2.32e-03	&	 1.56	&	1.22e-03	&	 1.97	 \\ 
128	&	5.17e-06	&	 2.00	&	4.63e-05	&	 2.00	&	7.87e-04	&	 1.81	&	3.13e-04	&	 2.01	 \\ 
256	&	1.29e-06	&	     	&	1.16e-05	&	     	&	2.25e-04	&	     	&	7.77e-05	&	     	 \\ 
 \hline
  \end{tabular}
  \caption{ {\bf Implicit method $(\theta=\frac{1}{2})$ }: $L^\infty$
    and $L^2$ error and convergence rates on a structured and unstructured mesh , with
    spatial step $h$, time step $k=0.00256 h^2$ and time $0.001$.}
\label{table:structured:implicit}
\end{table}

\subsection{Going beyond first order in time}
In this section, we propose a method which is second order in
  time, by replacing the nonlinear projection step 2 (b) in Algorithm
  1 by a linear projection step, and test the convergence order.  In
  Algorithm 1, step 2 (a) can be viewed as the predictor step and 2
  (b) as the corrector step.  The corrector step was used to conserve
  the length of the magnetization at each node.  By replacing this
  nonlinear projection by a linear projection step, it not only
  preserves the length of the magnetization, but also makes the method
  higher order.  Moreover, it has a similar complexity as the
  nonlinear projection step in that you only need to solve
  a $3 \times 3$ matrix equation for each node.  We
  defer a rigorous analysis to future work and present here the
  modified algorithm and some convergence test results.

     \vspace*{5pt}
 \textbf{Algorithm 2.} For a given time $\bar T>0$, set $J=[\frac{\bar T}{k}]$.
\begin{enumerate} \label{algorithm2}
\item Set an initial discrete magnetization $m^0$ at the nodes
  of the finite element mesh described in \S~\ref{sec:mesh} above.
\item For $j=0, \dots, J$,
\item [  a.]  compute an intermediate magnetization vector $m^*_i$ at each node by
\begin{equation*} \label{eq:algorithm2}
\begin{aligned}
& {m_i^*- m_i^j \over k} =  \hat{v}^j_i = \frac{(\mathbf{M}v^j)_i}{b_i}  \\
& = \frac{  \eta \;m^j_i
    \times (\mathbf{A}m+\theta k \mathbf{A} \hat v)^j_i + \alpha \eta \;
    m^j_i \times (m_i^j \times (\mathbf{A}m+\theta k \mathbf{A} \hat v)_i)^j}{b_i} \\
&  - \frac{ m^j_i \times (\mathbf{M} \bar h (m)+\theta k \mathbf{M} \bar h (\hat v))^j_i
    + \alpha m^j_i \times (m^j_i \times (\mathbf{M}\bar h (m) +
    \theta k \mathbf{M} \bar h (\hat v))_i)^j}{b_i} .
\end{aligned}
\end{equation*}
for $\theta \in [0,1]$ and for $i=1, \dots, N$.  
\item[ b.] Compute $m_i^{j+1} $ for $i=1, \dots, N$.
  \begin{equation*} \label{eq:algorithm3}
\begin{aligned}
& {m_i^{j+1}- m_i^j \over k} =  \\
& = \frac{  \eta \; {m_i^{j+1}+m_i^j \over 2}
    \times (\mathbf{A}m^{j+1/2}_i ) + \alpha \eta \;
    {m_i^{j+1}+m_i^j \over 2} \times (m_i^{j+1/2} \times (\mathbf{A}m^{j+1/2} )}{b_i} \\
&  - \frac{ {m_i^{j+1}+m_i^j \over 2} \times \mathbf{M} \bar h (m^{j+1/2})_i
    + \alpha {m_i^{j+1}+m_i^j \over 2} \times (m^{j+1/2}_i \times (\mathbf{M}\bar h (m^{j+1/2 }_i) )}{b_i} .
\end{aligned}
\end{equation*} where $m^{j+1/2} = \frac{m^{j}+m^*}{2}$ for $i=1, \dots, N$.
\end{enumerate}

\vspace*{5pt}  As before, we conduct a numerical test for the
  Landau-Lifshitz equation (\ref{eq:LL}) with effective field
  involving only the exchange energy term, with $h= \Delta m$ in
  equation (\ref{eq:h:def}), on the unit square with periodic boundary
  conditions, to compare the two algorithms.  For the convergence
  study, we used an implicit method $(\theta =0.5) $ on a structured
  and unstructured mesh. One of the unstructured meshes was shown in
  figure \ref{figure:unstructured}.  The $L^2$ errors were measured
  relative to an analytical solution (\ref{eq:anal}) for the
  Landau-Lifshitz equation with $h= \Delta m$.  The numerical results
  are summarized in the tables \ref{table:structured:implicithigh}.
  Figure \ref{figure:p3} shows the convergence rates of the methods,
  which shows first order in $k$ for Algorithm 1 and second order
  convergence for Algorithm 2.  
\begin{figure}[h]
   \includegraphics[width=.495\linewidth,trim=20 5 50 10,clip]{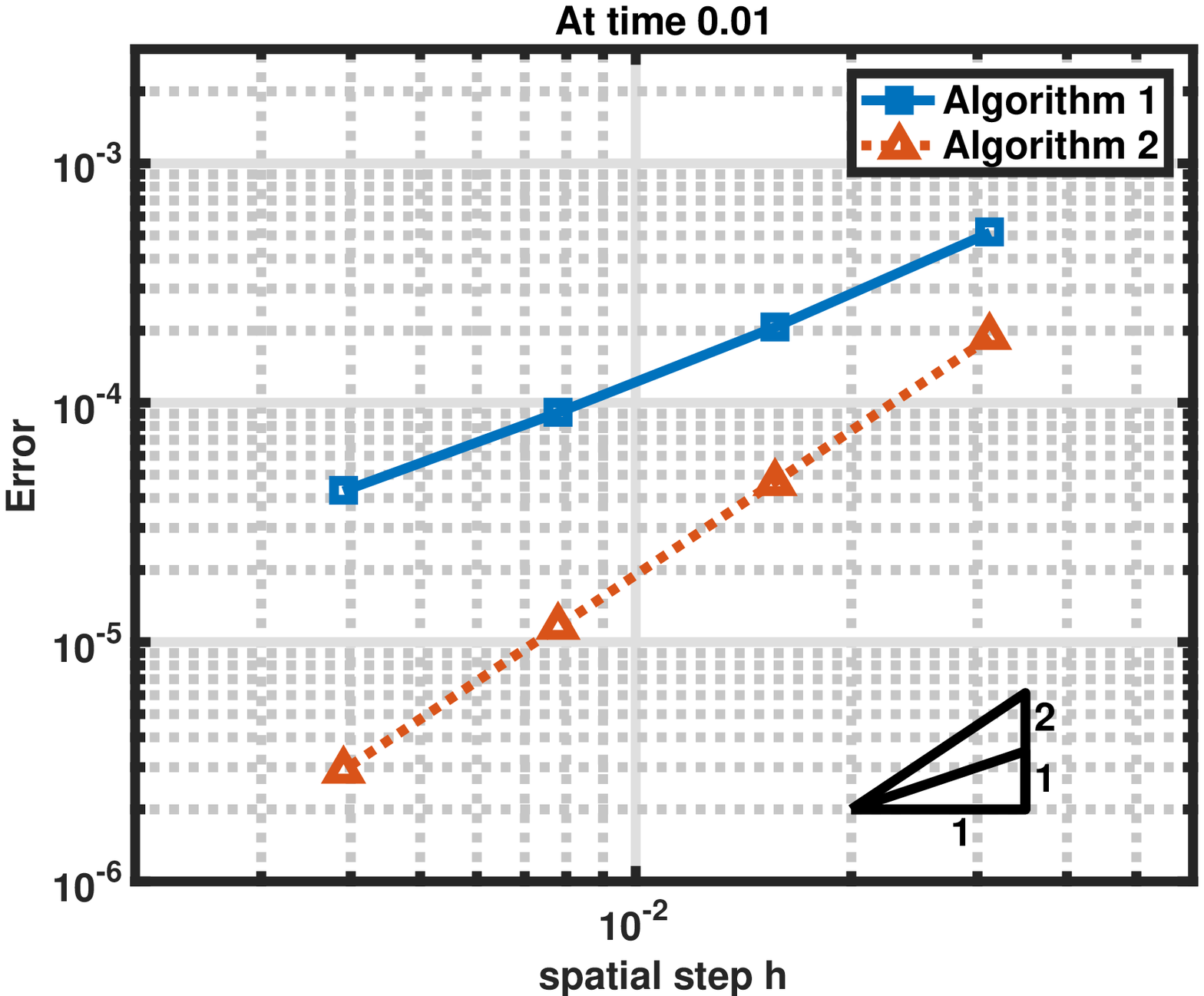}
   \hfill
   \includegraphics[width=.495\linewidth,trim=20 5 50 10,clip]{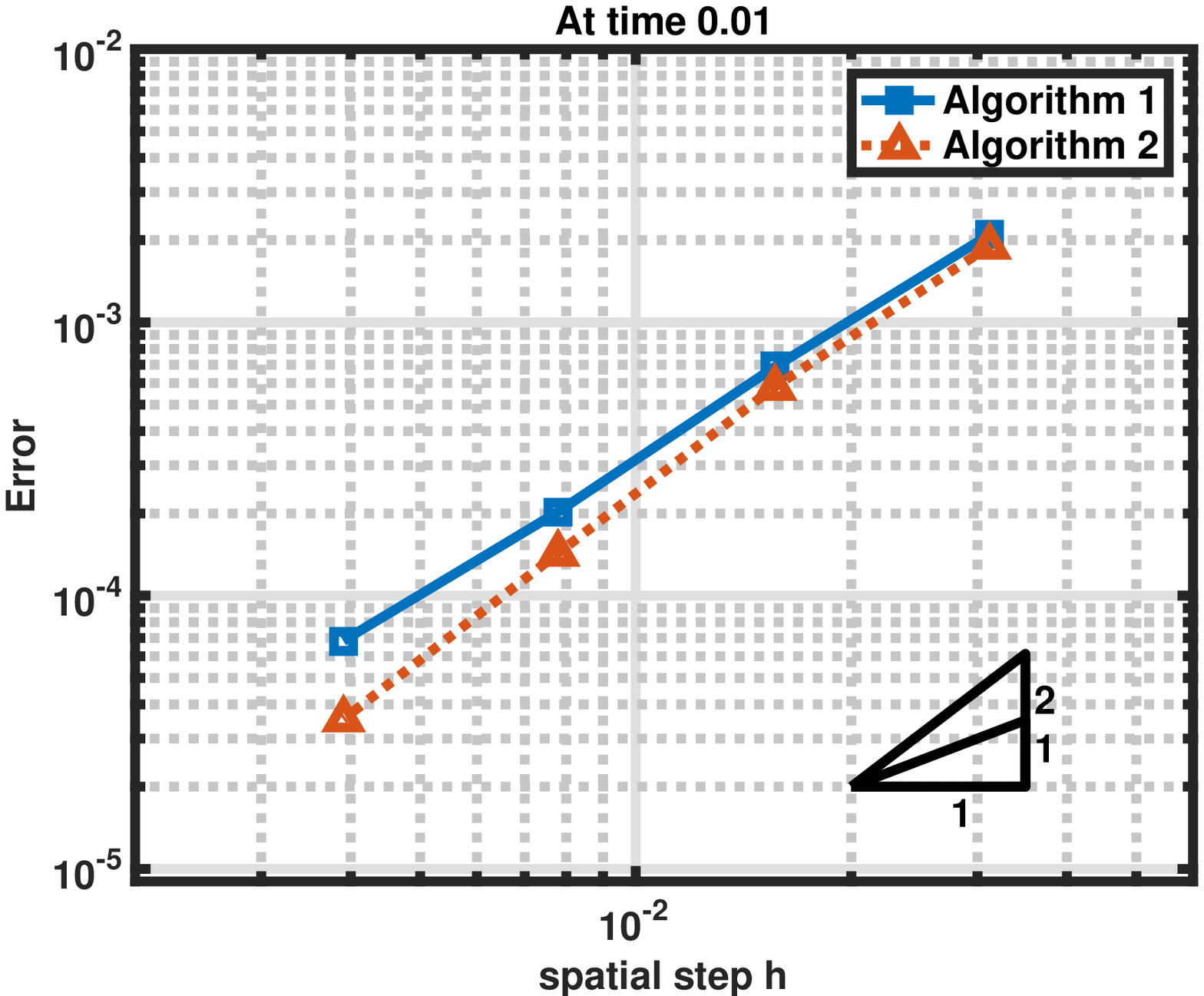}
 \caption{Convergence plot, Left :  Structured mesh, Right : Unstructured mesh. }
  \label{figure:p3}
\end{figure}

 \begin{table}[h]
\centering
 \begin{tabular}{|c| c| c|c|c|c| c|c|c|}
   \hline
 & \multicolumn{4}{|c|}{Structured mesh}  & \multicolumn{4}{|c|}{Unstructured mesh}\\  
 \hline
  $\frac{1}{h}$ 	&  Alg. 1&  rate  &  Alg. 2 &  rate  &   Alg. 1&  rate  &   Alg. 2 &  rate \\
\hline
32	&	5.14e-04	&	 1.32	&	1.87e-04	&	 2.01	&	2.10e-03	&	 1.61	&	1.91e-03	&	 1.72	 \\ 
64	&	2.06e-04	&	 1.18	&	4.66e-05	&	 2.00	&	6.87e-04	&	 1.75	&	5.81e-04	&	 2.01	 \\ 
128	&	9.12e-05	&	 1.10	&	1.16e-05	&	 2.00	&	2.04e-04	&	 1.57	&	1.44e-04	&	 2.03	 \\ 
256	&	4.27e-05	&	     	&	2.91e-06	&	     	&	6.84e-05	&	     	&	3.53e-05	&	     	 \\ 
 \hline
  \end{tabular}
  \caption{ {\bf Implicit method $(\theta=\frac{1}{2})$ }: 
 $L^2$ error and convergence rates on a structured and unstructured mesh , with
    spatial step $h$, time step $k=0.04 h$ and time $0.01$.}
     \label{table:structured:implicithigh}

\end{table}

\section{Proof of Theorem \ref{thm:main}}
\label{sec:pf}
In this section, we present the proof of the theorem, which states
that the sequence $\{m^{h,k}\}$, constructed by Algorithm 1 and
Definition \ref{def:main}, has a subsequence that converges weakly to
a weak solution $m$ of the Landau-Lifshitz-Gilbert equation under some
conditions.  That is, we show that the limit $m$ satisfies Definition
\ref{def:weak}.  In section \ref{subsection:pf}, we derive a
discretization of the weak form of the Landau-Lifshitz-Gilbert
equation satisfied by the $\{m^{h,k} \}$, namely
(\ref{eq:approxLLx3}).  In section \ref{subsec:energy}, we derive
energy estimates to show that the sequences $m^{h,k}$, $\bar m^{h,k}$
and $\hat v^{h,k}$ converge to $m$ in a certain sense made precise in
section \ref{subsection:weak}.  In section \ref{subsection:detail}, we
show that each term of the discretization of the weak form converges
to the appropriate limit, so that the limit $m$ satisfies the weak
form of the Landau-Lifshitz-Gilbert equation.  In section
\ref{subsection:energy}, we show that the limit $m$ satisfies the
energy inequality (\ref{eq:energyiinequalitydef}) in Definition
\ref{def:weak} (iv).  Finally, in section
  \ref{subsection:magnitude}, we establish that the magnitude
  of $m$ is $1$ a.e. in $\Omega_T$.

\subsection{Equations that $m^{h,k}$ and $ v^{h,k}$ satisfy}
\label{subsection:pf}
In this section, we derive
a discretization of the weak
  form of the Landau-Lifshitz-Gilbert equation. 
  This form is easier to use for the proof of Theorem \ref{thm:main}, since it does not involve the product
  of the weakly convergent sequences.  
  In general, a product of weakly convergent sequences is not weakly convergent. It is convergent
  only in some certain cases, such as when the sequences satisfy the
  hypothesis of the div-curl lemma \cite{tartar2009general,
    christiansen2005div}.

The generalized version of equation (\ref{eq:approxLL}) including
  all the terms in the effective field $h$ 
  (\ref{eq:h:def}) and with $0\leq \theta \leq 1$ is 
  \begin{equation} \label{eq:approxLLx}
\begin{aligned}
& \int_\Omega  v^{h,k}  \cdot w^h  = \eta \sum_{l,i} \int_\Omega (m^{h,k}_i \times \partial_{x_l} (m^{h,k}+\theta k \hat v^{h,k})  ) \cdot  \partial_{x_l} \phi_i \; w^h_i  \\
&\hspace{2cm} - \alpha\eta \sum_{l,i}   \int_\Omega \partial_{x_l} (m^{h,k}+\theta k \hat v^{h,k})  \cdot  \partial_{x_l} \phi_i w^h_i \\
&\hspace{2cm} + \alpha\eta\sum_{l,i}   \int_\Omega ( \partial_{x_l}  (m^{h,k}+\theta k \hat v^{h,k}) \cdot m^{h,k}_i)(m^{h,k}_i \cdot  w^h_i )\partial_{x_l} \phi_i \\
 &\hspace{2cm} - \sum_i  \int_\Omega (m_i^{h,k} \times \bar h (m^{h,k}+\theta k \hat v^{h,k})  ) \cdot   \phi_i  w^h_i \\
&\hspace{2cm}  + \alpha \sum_i  \int_\Omega\bar h (m^{h,k}+\theta k \hat v^{h,k})  \cdot  \phi_i w^h_i \\
&\hspace{2cm}- \alpha \sum_i   \int_\Omega (  \bar h (m^{h,k}+\theta k \hat v^{h,k}) \cdot m_i)(m_i \cdot  w^h_i \phi_i ).
\end{aligned}
\end{equation}
 In fact, by taking $w^h_i$ as $(1,0,0),
  (0,1,0)$ and $(0,0,1)$ in 
  (\ref{eq:approxLLx}), we get  (\ref{eq:algorithm}) in
  Algorithm 1. 
Setting $w^h = \sum_{j=1}^N (m^{h,k}_j \times u^h_j) \phi_j$ in
(\ref{eq:approxLLx}), we have
\begin{equation} \label{eq:approxLLx2}
\begin{aligned}
&- \sum_i  \int_\Omega   (m^{h,k}_i \times v^{h,k} ) \cdot  u^h_i \phi_i  =
 \eta \sum_l    \int_\Omega  \partial_{x_l}(m^{h,k}+\theta k \hat v^{h,k})   \cdot \partial_{x_l} u^h \\
&\hspace{2cm}-\eta\sum_{l,i}   \int_\Omega (\partial_{x_l} (m^{h,k}+\theta k \hat v^{h,k})   \cdot m^{h,k}_i)  \; ( m_i^{h,k} \cdot u^h_i)   \partial_{x_l} \phi_i \\
&\hspace{2cm}+ \alpha\eta \sum_{l,i}  \int_\Omega  (m_i^{h,k} \times \partial_{x_l} (m^{h,k}+\theta k \hat v^{h,k}) )\cdot  \partial_{x_l} \phi_i  u^h_i \\
&\hspace{2cm}- \sum_i  \int_\Omega ( \bar h (m^{h,k}+\theta k \hat v^{h,k})  ) \cdot   \phi_i  u^h_i \\
&\hspace{2cm}+ \sum_i   \int_\Omega (\bar h (m^{h,k}+\theta k \hat v^{h,k})   \cdot m^{h,k}_i)  \; ( m_i^{h,k} \cdot u^h_i)    \phi_i  \\
&\hspace{2cm}- \alpha \sum_i   \int_\Omega   (m_i^{h,k} \times \bar h (m^{h,k}+\theta k \hat v^{h,k}) ) \cdot u^h_i  \phi_i . 
\end{aligned}
\end{equation}
 Equations (\ref{eq:approxLLx}) and
  (\ref{eq:approxLLx2}) have terms that contain the product of weakly
  convergent sequences, namely the third term of the right hand side of 
  (\ref{eq:approxLLx}), and the second term of the right hand side of 
  (\ref{eq:approxLLx2}),  $\alpha\eta\sum_{l,i}
   \int_\Omega ( \partial_{x_l} (m^{h,k}+\theta k \hat
  v^{h,k}) \cdot m^{h,k}_i)(m^{h,k}_i \cdot w_i^h )\partial_{x_l} \phi_i
  .$
 By adding $\alpha$ times equation (\ref{eq:approxLLx2})
 to equation (\ref{eq:approxLLx}),    we eliminate the terms that contain the product of weakly
  convergent sequences :
\begin{equation} \label{eq:approxLLx3}
\begin{aligned}
&\int_\Omega \big[  v^{h,k}  \cdot w^h  - \alpha  \sum_i   (m^{h,k}_i \times v^{h,k} ) \cdot  w^h_i \phi_i   \big] \\ 
&\hspace{.1cm}=(1+\alpha^2) \bigg[ \eta  \sum_{l,i} \int_\Omega \big[  (m_i^{h,k} \times \partial_{x_l} m^{h,k})\cdot  \partial_{x_l} \phi_i  w^h_i  
+  \theta k   (m_i^{h,k} \times \partial_{x_l} \hat v^{h,k} )\cdot  \partial_{x_l} \phi_i  w^h_i \big] \\
&\hspace{.1cm}-  \sum_i \int_\Omega  \big[ (m_i^{h,k} \times \bar h (m^{h,k}) ) \cdot w^h_i  \phi_i 
+  \theta k   (m_i^{h,k} \times \bar h (\hat v^{h,k}) ) \cdot w^h_i  \phi_i\big]  \bigg]. 
\end{aligned}
\end{equation}
This is a similar procedure to subtracting $\alpha$ times the following equation
\begin{equation}\label{eq:LLmx}
m \times   \partial_t m = - m \times( m \times h) +\alpha m \times h
\end{equation}
from the Landau-Lifshitz equation (\ref{eq:LL}) to get the Landau-Lifshitz-Gilbert equation (\ref{eq:LLG}). 
Here, equation (\ref{eq:LLmx})  is obtained by taking $m\times$ the  Landau-Lifshitz equation  (\ref{eq:LL}).

\subsection{Energy inequality}
\label{subsec:energy}
In this section, we derive the energy inequalities we will need to prove Theorem \ref{thm:main}, namely
  (\ref{eq:energyestimate}) for $0 \leq \theta <\frac{1}{2}$ and
  (\ref{eq:energyestimate2}) for $ \frac{1}{2} \leq \theta \leq 1$.
We will use Theorem 1 from \cite{alouges2012convergent},
which states that the exchange energy is decreased after
  renormalization. This result goes back to
  \cite{alouges2004energetics,bartels2005stability} :
\begin{theorem} \label{thm:energydecrease}
For the $P^1$ approximation in $\Omega \subset \mbb{R}^2 $, 
if
\begin{equation} \label{eq:hy1}
\begin{aligned}
 \int_\Omega \nabla\phi_i \cdot \nabla\phi_j  \leq 0, \quad \text{ for } i \neq j,
  \end{aligned}
\end{equation}
then for all $w = \sum_{i=1}^N w_i \phi_i \in F^h$ such that $|w_i|
\geq 1$ for $i=1, \dots, N$, we have
\begin{equation} \label{eq:energydecrease}
\begin{aligned}
 \int_\Omega  \left| \nabla I_h (\frac{w}{|w|})   \right|^2 \leq \int_\Omega  \left| \nabla w \right|^2 .
 \end{aligned}
 \end{equation}
\end{theorem}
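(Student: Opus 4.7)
\textbf{Proof plan for Theorem \ref{thm:energydecrease}.}
The strategy is to expand both exchange energies as quadratic forms in the nodal values, use the partition of unity to rewrite these forms as weighted sums of squared differences of nodal values, and then reduce to a nodewise geometric inequality.

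First, let $A_{ij} := \int_\Omega \nabla \phi_i \cdot \nabla \phi_j$ and set $\tilde w_i := w_i/|w_i|$, so that $I_h(w/|w|) = \sum_i \tilde w_i \phi_i$. Expanding,
\begin{equation*}
\int_\Omega |\nabla w|^2 = \sum_{i,j} A_{ij}\, w_i \cdot w_j, \qquad \int_\Omega |\nabla I_h(w/|w|)|^2 = \sum_{i,j} A_{ij}\, \tilde w_i \cdot \tilde w_j.
\end{equation*}
Since the nodal basis is a partition of unity on $\Omega$, we have $\sum_j \nabla \phi_j \equiv 0$, and hence $\sum_j A_{ij} = 0$ for each $i$. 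Together with the symmetry $A_{ij}=A_{ji}$, this yields the standard "discrete integration by parts" identity
\begin{equation*}
\sum_{i,j} A_{ij}\, u_i \cdot u_j = -\tfrac{1}{2} \sum_{i \neq j} A_{ij}\, |u_i - u_j|^2
\end{equation*}
for any vectors $\{u_i\}$. Applying this with $u_i = w_i$ and with $u_i = \tilde w_i$ converts each energy into a nonnegative combination of squared nodal differences, since hypothesis (\ref{eq:hy1}) gives $-A_{ij} \geq 0$ for $i \neq j$.

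Second, in light of the sign of the weights, (\ref{eq:energydecrease}) reduces to the term-by-term bound
\begin{equation*}
|\tilde w_i - \tilde w_j| \leq |w_i - w_j| \qquad \text{whenever } |w_i|,|w_j| \geq 1.
\end{equation*}
I would prove this by direct computation. Writing $r := |w_i|$, $s := |w_j|$, and $t := \tilde w_i \cdot \tilde w_j \in [-1,1]$, a short calculation gives
\begin{equation*}
|w_i - w_j|^2 - |\tilde w_i - \tilde w_j|^2 \;=\; r^2 + s^2 - 2 - 2(rs - 1)\, t.
\end{equation*}
When $t \leq 0$ the right-hand side is manifestly nonnegative since $r^2 + s^2 \geq 2$ and $rs \geq 1$. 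When $t \in [0,1]$, using $t \leq 1$ reduces the claim to $r^2 + s^2 - 2 \geq 2(rs-1)$, i.e.\ $(r-s)^2 \geq 0$. This is exactly the discrete analog of the pointwise bound $|\nabla(w/|w|)| \leq |\nabla w|$ valid where $|w| \geq 1$.

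The only non-routine ingredient is the partition-of-unity rewriting, which is what converts a statement about an indefinite quadratic form (the stiffness matrix is only positive semidefinite) into a sum with nonnegative weights $-A_{ij}$; this is precisely where hypothesis (\ref{eq:hy1}) enters. I do not foresee a genuine obstacle: once the two reductions are in hand, the geometric lemma closes the proof. Summing the nodewise inequality against the weights $-A_{ij} \geq 0$ gives $-\tfrac{1}{2}\sum_{i\neq j} A_{ij}|\tilde w_i - \tilde w_j|^2 \leq -\tfrac{1}{2}\sum_{i\neq j} A_{ij}|w_i - w_j|^2$, which is exactly (\ref{eq:energydecrease}).
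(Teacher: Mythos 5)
Your proposal is correct and complete. Note that the paper does not actually prove this statement: it imports it verbatim as Theorem 1 of the cited reference of Alouges et al.\ (2012), attributing it further back to Alouges (2004) and Bartels (2005). The argument you give --- expanding both Dirichlet energies as quadratic forms in the nodal values, using $\sum_j \phi_j \equiv 1$ to get zero row sums and hence the identity $\sum_{i,j} A_{ij}\, u_i \cdot u_j = -\tfrac{1}{2}\sum_{i \neq j} A_{ij}\,|u_i - u_j|^2$, and then reducing to the nodewise inequality $|\tilde w_i - \tilde w_j| \leq |w_i - w_j|$ for $|w_i|, |w_j| \geq 1$ --- is precisely the standard proof in those references, and your verification of the nodewise inequality via $|w_i - w_j|^2 - |\tilde w_i - \tilde w_j|^2 = r^2 + s^2 - 2 - 2(rs-1)t$ checks out in both cases $t \leq 0$ and $t \in [0,1]$. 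One small observation your argument makes transparent: nothing in it uses $d = 2$; the hypothesis (\ref{eq:hy1}) alone suffices in any dimension, and the dimension-dependent remarks in the paper (the dihedral-angle condition in 3D) concern only when (\ref{eq:hy1}) can be guaranteed by the mesh geometry, not the implication itself.
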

In 3D, we have (\ref{eq:energydecrease}), if  an additional condition that all dihedral
angles of the tetrahedra of the mesh are smaller than $\frac{\pi}{2}$ is satisfied,
along with (\ref{eq:hy1}).
 Also, we will use inequality (14) of \cite{alouges2012convergent}, 
\begin{equation}\label{eq:hbar}
\begin{aligned}
 \left\Vert\bar h( m)  \right\Vert_{L^2} \leq  C_5 \left\Vert m \right\Vert_{L^2} + C_5
  \end{aligned}
 \end{equation}
 and equation (25) from \cite{alouges2014convergent},
 \begin{equation}\label{eq:hbar2}
\begin{aligned}
 \left\Vert h_s( m)  \right\Vert_{L^2} \leq  C_5 \left\Vert m \right\Vert_{L^2} 
  \end{aligned}
 \end{equation}
where $C_5$ are positive constants, depending only on $\Omega$.
 Furthermore, we will use an inequality (20) of
\cite{alouges2012convergent} in the proof, which states there exists
$C_6>0$ such that for all $1 \leq p < \infty$ and all $\phi_h \in
F^h$, we have
\begin{equation} \label{eq:fix0}
  \frac{1}{C_6} \left\Vert \phi_h \right\Vert_{L^p}^p \leq h^d
  \sum_{i=1}^N |\phi_h(x_i)|^p \leq C_6 \left\Vert\phi_h\right\Vert_{L^p}^p.
\end{equation}
Moreover, we will assume that there exists $C_7>0$ such that
\begin{equation} \label{eq:fix1}
\int_\Omega |\nabla v^h|^2 \leq \frac{C_7}{h^2} \int_\Omega | v^h|^2 
\end{equation}
for all $v^h \in F^h$.

Taking $w^h=\sum_{j=1}^N (m_j^{h,k} \times u^h_j) \phi_j$ in 
(\ref{eq:approxLLx3}), and setting $u^h=\hat v^{h,k}$, we have
\begin{equation} \label{eq:fix2}
\begin{aligned}
 &- \alpha  \sum_i \int_\Omega   v^{h,k}_i \cdot  \hat v_i \phi_i  
= 
(1+\alpha^2) \bigg[ \eta  \sum_{l,i} \int_\Omega   \big[(\partial_{x_l} m^{h,k}\cdot  \partial_{x_l} \phi_i   \hat v_i)  +  \theta k   (\partial_{x_l} \hat v^{h,k}\cdot  \partial_{x_l} \phi_i    \hat v_i) \big] \\
&\hspace{5cm} 
-  \sum_i \int_\Omega  \big[ ( \bar h (m^{h,k})  \cdot   \hat v_i)  \phi_i 
+  \theta k   ( \bar h (\hat v^{h,k})  \cdot   \hat v_i)  \phi_i\big] \bigg]\\
\end{aligned}
\end{equation}
where we have used the fact $m_i^{h,k} \cdot \hat v_i^{h,k} = 0$ for $i=1, \dots, N$.
This equation can be written as
\begin{equation} \label{eq:innerproduct}
\begin{aligned}
(\nabla m, \nabla \hat v) = - \theta k \left\Vert \nabla \hat
  v\right\Vert_{L^2}^2 - \frac{\alpha}{1+\alpha^2}
  \frac{1}{\eta}\sum_i \frac{| (\mathbf{M}v)_j|^2}{b_j} +
  \frac{1}{\eta} ( \bar h(m), \hat v) + \frac{\theta k}{\eta}( \bar
  h(\hat v), \hat v).
\end{aligned}
\end{equation}

We now derive an energy estimate. We have
\begin{equation} \label{eq:energyinequality}
\begin{aligned}
  \frac{1}{2} & \left\Vert \nabla  m^{j+1} \right\Vert_{L^2}^2 \leq \frac{1}{2}
  \left\Vert \nabla m^j + k \nabla \hat{v}^j\right\Vert_{L^2}^2 = \frac{1}{2}
  \left\Vert \nabla m^j \right\Vert_{L^2}^2  + k (\nabla m^j, \nabla \hat{v}^j)
  + \frac{1}{2} k^2 \left\Vert  \nabla \hat{v}^j \right\Vert_{L^2}^2 \\
  & \leq \frac{1}{2} \left\Vert \nabla m^j \right\Vert_{L^2}^2 -
  k (\frac{\alpha}{1+\alpha^2} ) \frac{1}{\eta} \sum_i
  \frac{| (\mathbf{M}v)_i^j|^2}{b^2_i} b_i+ \frac{1}{2} k^2
  \left\Vert  \nabla \hat{v}^j \right\Vert_{L^2}^2  -
  \theta k^2 \left\Vert \nabla \hat v^j\right\Vert_{L^2}^2   \\
  & \quad \quad+  \frac{k}{\eta}( \bar h(m^j), \hat v^j) +  \theta
  \frac{k^2}{\eta}( \bar h(\hat v^j), \hat v^j) \\
  & \leq \frac{1}{2} \left\Vert \nabla m^j \right\Vert_{L^2}^2 -
  k (\frac{\alpha}{1+\alpha^2} )\frac{1}{\eta} \frac{C_1}{C_6}\left\Vert \hat v^j\right\Vert_{L^2}^2 -
  (\theta-\frac{1}{2}) k^2 \left\Vert \nabla \hat v^j\right\Vert_{L^2}^2 +
  \frac{k}{\eta}( \bar h(m^j), \hat v^j) \\
  & \quad \quad +  \theta \frac{k^2}{\eta} ( \bar h_e, \hat v^j) 
\end{aligned}
\end{equation}
where the first inequality is obtained by Theorem \ref{thm:energydecrease},  
the second inequality by equation (\ref{eq:innerproduct}),
and the last inequality by the fact $ (  h_s(\hat v^j), \hat v^j)  <0$.  
We have the estimate for the last two terms of the above inequality :
\begin{equation} \label{eq:young}
\begin{aligned}
  \left|( \bar h(m^j)+ \theta k \bar h_e, \hat v^j) \right| \leq \left\Vert
  \bar h(m^j)+ \theta k \bar h_e\right\Vert_{L^2} \left\Vert \hat v^j\right\Vert_{L^2} \leq
  C_8  \left\Vert \hat v^j\right\Vert_{L^2} \leq
  \epsilon \left\Vert \hat v^j\right\Vert^2_{L^2} +\frac{1}{4\epsilon} C_8^2
\end{aligned}
\end{equation}
for some $C_8 >0$, where the second inequality is obtained by equation (\ref{eq:hbar}) and the last inequality by Young's
inequality with $\epsilon =\frac{1}{2} \frac{\alpha}{1+\alpha^2}
\frac{C_1}{C_6}$.  Summing the inequality (\ref{eq:energyinequality})
from $j=0, \dots, J-1$ and using (\ref{eq:young}), we get
\begin{equation}
\begin{aligned}
  & \frac{1}{2} \left\Vert \nabla m^J \right\Vert_{L^2}^2 +
  k(\frac{1}{2\eta} (\frac{\alpha}{1+\alpha^2} ) \frac{C_1}{C_6} -C_7(\frac{1}{2}-\theta) \frac{k}{h^2} )
  \sum_{j=0}^{J-1}\left\Vert \hat v^j\right\Vert_{L^2}^2 \leq
  \frac{1}{2} \left\Vert \nabla m^0 \right\Vert_{L^2}^2+ C_9 T
  \end{aligned}
\end{equation}
with $\frac{k}{h^2} \leq C_0 < \frac{1}{2} \frac{\alpha }{1+\alpha^2}\frac{C_1}{C_6} \frac{1}{C_7 \eta}$,  for $0 \leq \theta <\frac{1}{2} $,
and 
\begin{equation}
\begin{aligned}
  &\frac{1}{2} \left\Vert \nabla m^J \right\Vert_{L^2}^2 
  +k(\frac{1}{2\eta} \frac{\alpha}{1+\alpha^2} ) \frac{C_1}{C_6}
  \sum_{j=0}^{J-1}\left\Vert \hat v^j\right\Vert_{L^2}^2+
  (\theta-\frac{1}{2}) k^2  \sum_{j=0}^{J-1}\left\Vert \nabla \hat v^j\right\Vert_{L^2}^2 \leq
  \frac{1}{2} \left\Vert \nabla m^0 \right\Vert_{L^2}^2 + C_9 T
   \end{aligned}
\end{equation}
for $\frac{1}{2} \leq \theta \leq 1$, and for some $C_9>0$.

In summary, we have the energy inequalities 
\begin{equation}\label{eq:energyestimate}
\begin{aligned}
   \frac{1}{2} \int_\Omega &| \nabla m^{h,k}(\cdot, T) |^2  +
  (\frac{1}{2\eta} (\frac{\alpha}{1+\alpha^2} )
  \frac{C_1}{C_6} -C_7C_0)  
  \int_{\Omega_T} | \hat v^{h,k}|^2  \\
 &  \leq
  \frac{1}{2} \int_\Omega |\nabla m^{h,k}(\cdot, 0) |^2+ C_9 T
  \end{aligned}
\end{equation}
with $ C_0 < \frac{1}{2} \frac{\alpha }{1+\alpha^2}\frac{C_1}{C_6} \frac{1}{C_7 \eta}$,  for $0 \leq \theta <\frac{1}{2} $ and 
\begin{equation}\label{eq:energyestimate2}
\begin{aligned}
  \frac{1}{2} \int_\Omega &\left| \nabla m^{h,k}(\cdot, T) \right|^2 \;  +
  (\frac{1}{2 \eta} \frac{\alpha}{1+\alpha^2} ) \frac{C_1}{C_6}
  \int_{\Omega_T} \left| \hat v^{h,k} \right|^2\;  \\
&  + (\theta-\frac{1}{2}) k \int_{\Omega_T} \left| \nabla \hat v^{h,k} \right|^2\;  
 \leq    \frac{1}{2} \int_\Omega \left| \nabla m^{h,k}(\cdot, 0) \right|^2+ C_9T.
   \end{aligned}
\end{equation}
for $\frac{1}{2} \leq \theta \leq 1$.
\subsection{Weak convergence of $m^{h,k}$, $\bar m^{h,k}$ and $\hat v^{h,k}$}
\label{subsection:weak}
In this section, we show the weak convergence of $\bar
m^{h,k}$ and $\hat v^{h,k}$ and strong convergence of  $m^{h,k}$ in some sense,
based on the energy estimates (\ref{eq:energyestimate}) and (\ref{eq:energyestimate2}). 
We follow similar arguments from section 6 of \cite{alouges2014convergent}.

Since we have
\begin{equation}\label{eq:k1}
 \left| \frac{m_i^{j+1}-m_i^j}{k}\right| \leq \left|\hat v_i^j\right|
 \end{equation}
for $ i=1, \dots, N$ and $j=0, \dots, J-1$, we have
\begin{equation}
  \left\Vert \partial_t \bar m^{h,k}\right\Vert_{L^2(\Omega)} =
  \left\Vert\frac{m^{j+1}-m^j}{k}\right\Vert_{L^2(\Omega)}
  \leq C_6 \left\Vert \hat{v}^{h,k} \right\Vert_{L^2(\Omega)}.
\end{equation}
Thus, we have
\begin{equation}\label{eq:timeder}
  \left\Vert \partial_t \bar m^{h,k}\right\Vert_{L^2(\Omega_T)} =
  \left\Vert\frac{m^{j+1}-m^j}{k}\right\Vert_{L^2(\Omega_T)} \leq
  C_6 \left\Vert \hat{v}^{h,k} \right\Vert_{L^2(\Omega_T)}
\end{equation}
which is bounded by the energy inequalities, (\ref{eq:energyestimate}) for $0 \leq \theta
<\frac{1}{2}$ and (\ref{eq:energyestimate2}) for $\frac{1}{2} \leq
\theta \leq 1$.  Hence, $\bar
m^{h,k}$ is bounded in $H^1(\Omega _T)$ and $\hat v^{h,k}$
is bounded in $L^2(\Omega _T)$ by (\ref{eq:timeder}) and by the energy inequalities,
(\ref{eq:energyestimate}) for $0 \leq \theta <\frac{1}{2}$ and
(\ref{eq:energyestimate2}) for $\frac{1}{2} \leq \theta \leq 1$. 
Thus, by passing to subsequences, there exist $m \in
H^1(\Omega _T)$ and $\hat v \in L^2(\Omega _T)$
such that
\begin{equation} \label{eq:weak}
\begin{aligned}
&\bar m^{h,k} \to m \text{ weakly in } H^1(\Omega _T),\\
&\bar m^{h,k} \to m \text{ strongly in } L^2(\Omega _T),\\
&\hat v^{h,k} \to \hat v \text{ weakly in } L^2(\Omega _T).
\end{aligned}
\end{equation}
Moreover, we have
\begin{equation}\label{eq:k2}
\begin{aligned}
  \left\vert m_i^{j+1}-m_i^j - k \hat v^j_i\right| =\left| \frac{m_i^j+k \hat v_i^j}{|m_i^j+k \hat v_i^j |}
  -m_i^j - k \hat v^j_i \right|  = \left| 1-|m_i^j+k \hat v_i^j|  \right| \leq
  \frac{1}{2}k^2 \left|\hat v_i^j \right|^2,
\end{aligned}
\end{equation}
since $|m_i^j+k \hat v_i^j|=\sqrt{1+k^2 |\hat v_i^j|^2} \leq
1+\frac{1}{2}k^2 |\hat v_i^j|^2$, for $ i=1, \dots, N$ and $j=0, \dots, J-1$.  Thus,
\begin{equation}\label{eq:mtv}
  \left\Vert \partial_t \bar m^{h,k}-\hat v^{h,k} \right\Vert_{L^1(\Omega _T)} \leq
  \frac{1}{2} k C_2 C_6 \left\Vert \hat v^{h,k}\right\Vert_{L^2(\Omega _T)}^2 
\end{equation}
which converges to $0$ as $h,k \to 0$, so 
\begin{equation} \label{eq:weakmt}
\partial_t m= \hat v.
\end{equation}

Furthermore,  since
\begin{equation}\label{eq:mhkweak}
  \left\Vert m^{h,k}-\bar m^{h,k}\right\Vert_{L^2(\Omega_T)} =
  \left\Vert (t-jk)\frac{m^{j+1}-m^j}{k}\right\Vert_{L^2(\Omega_T)} \leq
  k \left\Vert \partial_t \bar m^{h,k}\right\Vert_{L^2(\Omega_T)} 
\end{equation}
and the right hand side goes to $0$ as $h,k \to 0$, we have
\begin{equation}\label{eq:mhkstrongl2}
m^{h,k} \to m \text{ strongly in } L^2(\Omega _T).
\end{equation}

In summary, we have shown that there exist a subsequence of $ \{\bar m^{h,k}  \}$ that converges weakly in $H^1(\Omega \times(0,T))$,
a subsequence of $ \{\bar v^{h,k}  \}$ that converges weakly in $L^2(\Omega \times(0,T))$, 
 and a subsequence of $\{ m^{h,k} \}$ converges strongly in $L^2(\Omega_T)$ 
based on the energy estimates (\ref{eq:energyestimate}) and (\ref{eq:energyestimate2}). 
However, in our numerical tests in section \ref{sec:numerical}, 
it was not necessary to take subsequences and the method was in fact second order in space and first order in time.
Thus, there is still a gap in what we are able to prove and the practical performance of the algorithm in cases
where the weak solution is unique and sufficiently smooth.
\subsection{The proof that the limit $m$ actually satisfies Landau-Lifshitz-Gilbert equation}

\label{subsection:detail}
In this section, we show that each term of 
equation (\ref{eq:approxLLx3}) converges to the appropriate limit, so that the limit $m$ of the sequences $\{\bar m^{h,k}\}$  and $\{m^{h,k}\}$ satisfies the weak form of the Landau-Lifshitz-Gilbert equation  (\ref{eq:defLLG}) in Definition \ref{def:weak}.
\begin{lemma}\label{lemma:vmt}
Let the sequences $\{m^{h,k}\}$, $\{\bar m^{h,k}\}$, $\{\hat v^{h,k}\}$, and $\{v^{h,k}\}$ be defined by Definition \ref{def:main}. Also, let $m \in H^1(\Omega _T)$ be the limit as in (\ref{eq:weak}) and (\ref{eq:mhkstrongl2}).
 Moreover, let's assume $ w\in (C^\infty(\Omega _T)^3 \cap (H^1(\Omega _T))^3$, and $w^h = I_h (w) \in F_h$ as in equation (\ref{eq:interpolation}). 
 Then we have
\begin{equation}\label{eq:lhsold}
\begin{aligned}
  \lim_{h,k \to 0}\int_{\Omega_T}  v^{h,k} \cdot w^h   =
  \lim_{h,k \to 0}\int_0^T  \sum_{j=1}^N \hat v_j^{h,k} \cdot w_j^h \int_\Omega \phi_j  
  = \int_{\Omega_T} \partial_t m \cdot w    .
\end{aligned}
\end{equation}
\end{lemma}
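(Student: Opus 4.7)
\textbf{Proof plan for Lemma \ref{lemma:vmt}.}

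The plan is to prove the two equalities in (\ref{eq:lhsold}) separately. The first one is an exact algebraic identity coming from the definition of the mass-lumped velocity, and the second is a limit that combines a standard vertex-quadrature error estimate with the weak convergence $\hat v^{h,k}\rightharpoonup \hat v=\partial_t m$ in $L^2(\Omega_T)$ established in (\ref{eq:weak}) and (\ref{eq:weakmt}).

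For the first equality, I would expand $v^{h,k}=\sum_i v_i^{h,k}\phi_i$ and $w^h=\sum_j w_j^h\phi_j$ and write
\begin{equation*}
\int_\Omega v^{h,k}\cdot w^h\,dx=\sum_{i,j}v_i^{h,k}\cdot w_j^h\,M_{ij}=\sum_j (\mathbf{M}v^{h,k})_j\cdot w_j^h=\sum_j \hat v_j^{h,k}\cdot w_j^h\,b_j,
\end{equation*}
where I have used symmetry of $\mathbf{M}$ and the defining relation $\hat v_j=(\mathbf{M}v)_j/b_j$ from (\ref{eq:algorithm}). Integrating in $t$ gives the first equality at every $h,k$, not just in the limit.

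For the second equality, the natural intermediate quantity is $\int_{\Omega_T}\hat v^{h,k}\cdot w$. I would compare the lumped sum to this integral using the identity
\begin{equation*}
\sum_j \hat v_j^{h,k}\cdot w_j^h\,b_j-\int_\Omega \hat v^{h,k}\cdot w\,dx=\sum_j \hat v_j^{h,k}\cdot\int_\Omega\phi_j(x)\bigl(w(x_j)-w(x)\bigr)\,dx,
\end{equation*}
which is precisely the vertex-quadrature error. Since $w$ is smooth, $|w(x_j)-w(x)|\leq\|\nabla w\|_\infty h$ on $\mathrm{supp}\,\phi_j$, so each summand is bounded by $C h\|\nabla w\|_\infty|\hat v_j^{h,k}|b_j$. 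Summing and using the $L^p$ equivalence (\ref{eq:fix0}) with $p=1$ together with the mass bound $b_j\leq C_2 h^d$ gives
\begin{equation*}
\Bigl|\sum_j \hat v_j^{h,k}\cdot w_j^h\,b_j-\int_\Omega \hat v^{h,k}\cdot w\,dx\Bigr|\leq C h\,\|\nabla w\|_\infty\,\|\hat v^{h,k}\|_{L^1(\Omega)}.
\end{equation*}
Integrating in $t\in[0,T]$ and using the $L^2(\Omega_T)$-bound on $\hat v^{h,k}$ from the energy estimates (\ref{eq:energyestimate})--(\ref{eq:energyestimate2}), the right side is $O(h)$ and so tends to $0$. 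Finally, the weak convergence $\hat v^{h,k}\rightharpoonup\hat v$ in $L^2(\Omega_T)$ against the test function $w\in L^2(\Omega_T)$ yields $\int_{\Omega_T}\hat v^{h,k}\cdot w\to\int_{\Omega_T}\hat v\cdot w=\int_{\Omega_T}\partial_t m\cdot w$, using $\partial_t m=\hat v$ from (\ref{eq:weakmt}).

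The only mildly delicate step is the quadrature comparison, since we have no gradient control on $\hat v^{h,k}$ in the explicit case. However, since the error falls on $w$ rather than on $\hat v^{h,k}$, an $L^1$-type bound on $\hat v^{h,k}$ suffices, and this is immediate from the $L^2$ energy estimate on a bounded domain. Thus the main obstacle is really just bookkeeping rather than a genuine difficulty.
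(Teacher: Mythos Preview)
Your proof is correct and, for the quadrature step, takes a cleaner route than the paper. The first equality is handled identically in spirit (the paper leaves it implicit; you spell out the $\hat v_j=(\mathbf Mv)_j/b_j$ identity, which is exactly right). For the second equality the paper rewrites the lumped sum as $\int_{\Omega_T}I_h(\hat v^{h,k}\cdot w^h)$, compares it to $\int_{\Omega_T}\hat v^{h,k}\cdot w^h$ via the Bramble--Hilbert lemma on each element, and then must invoke the inverse inequality $\|\nabla\hat v^{h,k}\|_{L^2}\le Ch^{-1}\|\hat v^{h,k}\|_{L^2}$ to absorb the gradient of $\hat v^{h,k}$ that appears in $\Delta(\hat v^{h,k}\cdot w^h)$; the resulting $h^{-1}$ loss is compensated by the $h^2$ from interpolation. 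You instead compare the lumped sum directly to $\int_{\Omega_T}\hat v^{h,k}\cdot w$ and place the entire quadrature error on the smooth test function via $|w(x_j)-w(x)|\le Ch\|\nabla w\|_\infty$, so no derivative of $\hat v^{h,k}$ ever appears and only the $L^2(\Omega_T)$ energy bound on $\hat v^{h,k}$ is needed. Both approaches finish with the weak convergence $\hat v^{h,k}\rightharpoonup\partial_t m$; yours tests against the fixed $w$, the paper's against $w^h$ (which requires in addition the harmless $w^h\to w$ in $L^2$). Your argument is more elementary and avoids the Bramble--Hilbert/inverse-inequality machinery altogether.
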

\begin{proof}
 The difference between the last two terms is bounded by 
\begin{equation}\label{eq:v1}
\begin{aligned}
\left| \int_{\Omega_T} I_h(\hat v^{h,k} \cdot  w^h) - \hat v^{h,k} \cdot  w^h \right| 
+ \left| \int_{\Omega_T} \hat v^{h,k} \cdot w^h- \partial_t m  \cdot w \right|.
\end{aligned}
\end{equation}
The first term of  (\ref{eq:v1}) has the following
estimate. For each element $L$, we have $\hat v^{h,k}(\cdot, t) \cdot w^h(\cdot,t) \in
C^\infty(L)$ and
\begin{equation}
\begin{aligned}
  &\left\Vert I_h(\hat v^{h,k} \cdot w^h)- \hat v^{h,k} \cdot  w^h\right\Vert_{L^2(L)}^2 \leq C_{10}h^4
  \left\Vert \Delta (\hat v^{h,k} \cdot  w^h) \right\Vert_{L^2(L)}^2  \\
  &\leq C_{10}h^4 ( \left\Vert \Delta \hat  v^{h,k} \cdot w^h \right\Vert^2_{L^2(L)}+
  \left\Vert \nabla  \hat v^{h,k}  \cdot \nabla w^h \right\Vert^2_{L^2(L)}+\left\Vert \hat v^{h,k}
  \cdot \Delta w^h \right\Vert_{L^2(L)}^2)\\
  &\leq C_{10}h^4 ((\left\Vert \nabla \hat  v^{h,k}\cdot \nabla w^h \right\Vert^2_{L^2(L)}  )
\end{aligned}
\end{equation}
for some $C_{10}>0$, where the first inequality is obtained by the Bramble-Hilbert lemma \cite{braess2007finite}, 
and in the last inequality we have used $\Delta \hat v^{h,k} = 0 $ and $\Delta w^h =0$ in $L$, since $\hat v^{h,k}$  and $w^h$ are the sum of piecewise linear functions. We have the estimate 
\begin{equation} \label{eq:vh}
\begin{aligned}
  \left\Vert \nabla\hat  v^{h,k}\right\Vert_{L^2(\Omega)}^2  \leq  \sum_L  \int_L |\sum_i
  ( \hat v^{h,k})_i \nabla \phi_i |^2   \leq \frac{C_{11}}{h^2}
  \sum_L   |\sum_{i\in I_L }(\hat v^{h,k})_i  |^2 |L|\\
 \leq C_{12} h^{d-2}   \sum_{i=1}^{N} |(\hat v^{h,k})_i  |^2 
 \leq \frac{C_{13}}{h^2}\left\Vert \hat v^{h,k}\right\Vert_{L^2(\Omega)}^2.
 \end{aligned}
\end{equation}
for some constants $C_{11}, C_{12}, C_{13}>0$ and $I_L$ is the
index of nodes of $L$, where the second inequality is obtained by (\ref{eq:fix1}), and the last inequality by (\ref{eq:fix0}). 
 Hence,
\begin{equation}
\begin{aligned}
\left\Vert I_h(\hat v^{h,k}\cdot w^h)- \hat v^{h,k}\cdot  w^h\right\Vert_{L^2(\Omega _T)} ^2
 &\leq C_{10}h^4 \left\Vert \nabla \hat  v^{h,k}\cdot  \nabla w^h \right\Vert^2_{L^2(\Omega _T)} 
  \leq C_{14} h^2\left\Vert  \hat  v^{h,k} \right\Vert_{L^2(\Omega _T)} 
\end{aligned}
\end{equation}
for some constant $C_{14}>0$. Therefore, the first term of (\ref{eq:v1}) goes to $0$ as $h,k \to 0$. Moreover, the second term of (\ref{eq:v1}) 
goes to $0$ by the weak convergence of $\hat v^{h,k}$ to $\partial_t m $ which are equations (\ref{eq:weak}) and (\ref{eq:weakmt}) .
\end{proof}

\begin{lemma} Under the same assumptions of Lemma \ref{lemma:vmt}, we have
\begin{equation}\label{eq:gilbert}
\begin{aligned}
\lim_{h,k \to 0} \int_{\Omega_T} \sum_i  (m^{h,k}_i \times v^{h,k} ) \cdot  w_i^h \phi_i  
&=\lim_{h,k \to 0} \int_{\Omega_T} \sum_i  (m^{h,k}_i \times \hat v_i^{h,k} ) \cdot  w_i^h \phi_i   \\
&=\int_{\Omega_T}  (m \times \partial_t m  ) \cdot  w 
 \end{aligned}
\end{equation}
\end{lemma}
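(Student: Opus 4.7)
The plan is to establish the two equalities separately, treating the first as an exact algebraic identity (not merely a limit statement) and reducing the second to a weak--strong compensated-compactness argument plus an interpolation-error bound.

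For the first equality, observe that inside $\int_\Omega \phi_i\,(m_i^{h,k}\times v^{h,k})\cdot w_i^h$ the vectors $m_i^{h,k}$ and $w_i^h$ are constants (nodal values), so by linearity of the cross product
\begin{equation*}
\int_\Omega \phi_i\,(m_i^{h,k}\times v^{h,k})\cdot w_i^h \;=\; w_i^h\cdot\Bigl( m_i^{h,k}\times \int_\Omega v^{h,k}\phi_i \Bigr).
\end{equation*}
Since $v^{h,k}=\sum_j v_j^{h,k}\phi_j\in F^h$, the inner integral equals $(\mathbf{M}v^{h,k})_i$, which by the definition of $\hat v^{h,k}$ in Algorithm~1 equals $b_i\,\hat v_i^{h,k}=\int_\Omega \phi_i\,\hat v_i^{h,k}$. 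Substituting this back produces the second integral exactly, so the first ``$=$'' in~(\ref{eq:gilbert}) holds for every $(h,k)$ and not only in the limit.

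For the second equality, I would first rewrite the integrand as an interpolant: since each summand depends only on nodal data,
\begin{equation*}
\sum_i (m_i^{h,k}\times \hat v_i^{h,k})\cdot w_i^h\,\phi_i \;=\; I_h\!\bigl[(m^{h,k}\times \hat v^{h,k})\cdot w^h\bigr].
\end{equation*}
Then I split the convergence into an interpolation error plus a limit of the non-interpolated triple product. For the limit I would apply the scalar triple-product identity,
\begin{equation*}
\int_{\Omega_T}(m^{h,k}\times \hat v^{h,k})\cdot w^h \;=\; -\int_{\Omega_T}\hat v^{h,k}\cdot(m^{h,k}\times w^h),
\end{equation*}
and observe that $w^h=I_h(w)\to w$ in $L^\infty(\Omega_T)$ (smooth $w$), while $m^{h,k}\to m$ strongly in $L^2(\Omega_T)$ by~(\ref{eq:mhkstrongl2}); uniform boundedness of both factors in $L^\infty$ and $L^2$ respectively gives $m^{h,k}\times w^h\to m\times w$ strongly in $L^2(\Omega_T)$. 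Combined with the weak $L^2$-convergence $\hat v^{h,k}\rightharpoonup \partial_t m$ from~(\ref{eq:weak})--(\ref{eq:weakmt}), the strong--weak pairing yields the limit $-\int_{\Omega_T}\partial_t m\cdot(m\times w)=\int_{\Omega_T}(m\times\partial_t m)\cdot w$. This is exactly the point of the rearrangement in Section~\ref{subsection:pf}: no product of two weakly convergent sequences appears.

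The interpolation error is the main technical obstacle. On each element $L$ the integrand $(m^{h,k}\times \hat v^{h,k})\cdot w^h$ is a polynomial of degree $\le 3$ whose Bramble-Hilbert bound $\|I_h f-f\|_{L^2(L)}^2\le C h^4\|D^2f\|_{L^2(L)}^2$ involves, after expanding $D^2$ and using that each individual piecewise-linear factor has vanishing Hessian on $L$, cross terms of the form $\nabla m^{h,k}\otimes \nabla \hat v^{h,k}\cdot w^h$, etc. Since $m^{h,k}$ is only bounded in $H^1$ and $\hat v^{h,k}$ is only bounded in $L^2$, the bound must be assembled elementwise using $|m_i^{h,k}|=1$, the inverse inequality~(\ref{eq:fix1}) applied to $\hat v^{h,k}$, and the norm equivalence~(\ref{eq:fix0}), exactly as in the preceding lemma~(\ref{eq:vh}). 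This absorbs the $h^{-1}$ from $\nabla \hat v^{h,k}$ into the available $h^4$ and yields $\|I_h[\cdots]-(\cdots)\|_{L^2(\Omega)}\le C h\,\|\hat v^{h,k}\|_{L^2(\Omega)}$; integrating in $t$ and using the uniform $L^2(\Omega_T)$-bound on $\hat v^{h,k}$ from the energy estimates~(\ref{eq:energyestimate}) and~(\ref{eq:energyestimate2}) drives the interpolation error to zero.
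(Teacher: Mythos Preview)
Your treatment of the first equality as an exact identity via $(\mathbf{M}v)_i = b_i \hat v_i$ is correct and in fact more explicit than the paper, which passes over this point silently. Your handling of the weak--strong limit for the non-interpolated triple product is also fine and equivalent to the paper's componentwise argument.

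The gap is in the interpolation-error step. You invoke the $L^2$ Bramble--Hilbert bound $\|I_hf-f\|_{L^2(L)}^2\le Ch^4\|D^2f\|_{L^2(L)}^2$, but among the cross terms of $D^2\bigl[(m^{h,k}\times\hat v^{h,k})\cdot w^h\bigr]$ is one of the form $\nabla m^{h,k}\otimes\nabla\hat v^{h,k}\cdot w^h$, whose $L^2(\Omega)$ norm cannot be controlled by the tools you list. Using $|m_i^{h,k}|=1$ gives at best $\|\nabla m^{h,k}\|_{L^\infty(L)}\le C/h$ elementwise, and combining this with the inverse inequality on $\hat v^{h,k}$ yields $\|\nabla m^{h,k}\,\nabla\hat v^{h,k}\|_{L^2(\Omega)}^2\le Ch^{-4}\|\hat v^{h,k}\|_{L^2}^2$, which exactly cancels the $h^4$ prefactor and does not vanish. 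Using the uniform $H^1$ bound on $m^{h,k}$ instead does not help either, since the product of two $L^2$ gradients is only in $L^1$.

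The paper avoids this by working in $L^1$: it applies the Bramble--Hilbert estimate in the form $\|I_hf-f\|_{L^1(L)}\le Ch^2\|\Delta f\|_{L^1(L)}$. The troublesome term is then bounded via H\"older by $\|w^h\|_{L^\infty}\|\nabla m^{h,k}\|_{L^2}\|\nabla\hat v^{h,k}\|_{L^2}\le Ch^{-1}\|\nabla m^{h,k}\|_{L^2}\|\hat v^{h,k}\|_{L^2}$, so the $h^2$ prefactor leaves a surviving factor of $h$. Since you only need the \emph{integral} of $I_h[\cdots]-(\cdots)$ over $\Omega_T$ to vanish, an $L^1$ bound is all that is required; replacing your $L^2$ claim with this $L^1$ version closes the gap and aligns your argument with the paper's.
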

\begin{proof}
 The difference between the last two terms is bounded by 
\begin{equation}\label{eq:vx}
\begin{aligned}
 & \left| \int_{\Omega_T} I_h( (m^{h,k})^a (\hat v^{h,k})^b (w^h)^c) - (m^{h,k})^a (\hat v^{h,k})^b (w^h)^c \right| \\
&+ \left| \int_{\Omega_T} (m^{h,k})^a (\hat v^{h,k})^b (w^h)^c-m^a (\partial_t m)^b w^c \right|
\end{aligned}
\end{equation}
for some $a,b,c \in \{1,2,3\}$.  The first term of (\ref{eq:vx}), has the
following estimate. For each element $L$, we have $(m^{h,k})^a (\hat
v^{h,k})^b (w^h)^c \in C^\infty(L)$ and
\begin{equation}
\begin{aligned}
&\left\Vert I_h((m^{h,k})^a (\hat v^{h,k})^b (w^h)^c) - (m^{h,k})^a (\hat v^{h,k})^b (w^h)^c \right\Vert_{L^1(L)} \\
&\leq C_{15} h^2 ( \left\Vert \Delta ((m^{h,k})^a (\hat v^{h,k})^b (w^h)^c) \right\Vert_{L^1(L)})\\
  &\leq C_{15} h^2 ( \left\Vert \nabla (m^{h,k})^a \nabla(\hat v^{h,k})^b (w^h)^c \right\Vert_{L^1(L)} +
  \left\Vert \nabla (m^{h,k})^a (\hat v^{h,k})^b \nabla (w^h)^c \right\Vert_{L^1(L)} \\
  &\hspace{1.5cm}  + \left\Vert  (m^{h,k})^a \nabla(\hat v^{h,k})^b \nabla (w^h)^c \right\Vert_{L^1(L)}   )
\end{aligned} 
\end{equation}
for some constant $C_{15}>0$, where the first inequality is obtained by Bramble-Hilbert lemma, and in the last inequality 
we have used $\Delta \hat m^{h,k} = 0 $, $\Delta \hat v^{h,k} = 0 $ and $\Delta w^h =0$ in $L$, since $m^{h,k}$ $\hat v^{h,k}$ 
 and $w^h$ are the sum of piecewise linear functions. Hence, we have the
estimate
\begin{equation}
\begin{aligned}
&\left\Vert I_h((m^{h,k})^a (\hat v^{h,k})^b (w^h)^c) - (m^{h,k})^a (\hat v^{h,k})^b (w^h)^c \right\Vert_{L^1(\Omega_T)} \\
  &\leq C_{16} h\left\Vert (\hat v^{h,k})^b \right\Vert_{L^2(\Omega_T)} ( \left\Vert \nabla (m^{h,k})^a\right\Vert_{L^2(\Omega_T)}  
 +h\left\Vert \nabla (m^{h,k})^a \right\Vert_{L^2(\Omega_T)}  \\
& + \left\Vert  (m^{h,k})^a \right\Vert_{L^2(\Omega_T)}  ) .
\end{aligned} 
\end{equation}
for some constant $C_{16}>0$, where we have used H\"{o}lder's inequality for all the terms and used (\ref{eq:vh})  for the first and the third terms.  Therefore, the first term of (\ref{eq:vx}) goes to $0$ as $h,k \to 0$. 
Moreover, the second term of (\ref{eq:vx}) 
goes to $0$ by the weak convergence of $(\hat v^{h,k})^b$ to $(\partial_t m)^b $ established in (\ref{eq:weak}) and (\ref{eq:weakmt}), and strong convergence of $(m^{h,k})^a$ to
$m^a$.
\end{proof}

\begin{lemma} \label{lemma:gyro:old}Under the same assumptions of Lemma \ref{lemma:vmt}, we have
\begin{equation}
\begin{aligned}
  \lim_{h,k \to 0}\sum_{l,i} \int_{\Omega_T} (m^{h,k}_i \times  \partial_{x_l}m^{h,k}) \cdot
 w_i^h \;  \partial_{x_l} \phi_i  \; = 
   \sum_l \int_{\Omega_T} (m \times \partial_{x_l} m ) \cdot  \partial_{x_l} w \;.
\end{aligned}
\end{equation}
\end{lemma}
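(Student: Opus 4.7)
The plan is to decompose the left-hand side using the partition-of-unity identity $\sum_i w_i^h\,\partial_{x_l}\phi_i = \partial_{x_l} w^h$ so that
\[
\sum_{l,i}\int_{\Omega_T}(m_i^{h,k}\times\partial_{x_l}m^{h,k})\cdot w_i^h\,\partial_{x_l}\phi_i
=\sum_l\int_{\Omega_T}(m^{h,k}\times\partial_{x_l}m^{h,k})\cdot\partial_{x_l}w^h + R^{h,k},
\]
where the remainder records the error of replacing the nodal value $m_i^{h,k}$ by the piecewise linear function $m^{h,k}(x)$:
\[
R^{h,k}=\sum_{l,i}\int_{\Omega_T}\bigl[(m_i^{h,k}-m^{h,k})\times\partial_{x_l}m^{h,k}\bigr]\cdot w_i^h\,\partial_{x_l}\phi_i.
\]

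For the main term, the energy inequalities (\ref{eq:energyestimate})--(\ref{eq:energyestimate2}) furnish a uniform bound on $\nabla m^{h,k}$ in $L^2(\Omega_T)$; combined with the strong convergence $m^{h,k}\to m$ in $L^2(\Omega_T)$ from (\ref{eq:mhkstrongl2}) and a distributional consistency argument, this forces $\partial_{x_l}m^{h,k}\rightharpoonup\partial_{x_l}m$ weakly in $L^2(\Omega_T)$ along the subsequence already selected. Since $|m^{h,k}|\le 1$ pointwise (convex combination of unit vectors), weak--strong continuity of the bilinear product yields $m^{h,k}\times\partial_{x_l}m^{h,k}\rightharpoonup m\times\partial_{x_l}m$ weakly in $L^2(\Omega_T)$. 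Smoothness of $w$ gives $\partial_{x_l}w^h\to\partial_{x_l}w$ uniformly on $\Omega_T$, so the main term converges to $\sum_l\int_{\Omega_T}(m\times\partial_{x_l}m)\cdot\partial_{x_l}w$.

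The harder step is to show $R^{h,k}\to 0$, which I would do element by element. On each simplex $L$, $\partial_{x_l}m^{h,k}|_L$ and $\partial_{x_l}\phi_i|_L$ are constants, so integrating in $x$ reduces the contribution to
\[
R^{h,k}|_L = |L|\sum_{i\in I_L}\bigl[(m_i-\bar m_L)\times\partial_{x_l}m|_L\bigr]\cdot w_i^h\,\partial_{x_l}\phi_i|_L,
\]
where $\bar m_L$ and $\bar w_L$ denote the means of $m^{h,k}$ and $w^h$ on $L$. Splitting $w_i^h=(w_i^h-\bar w_L)+\bar w_L$, the $\bar w_L$ piece vanishes identically because $\sum_{i\in I_L}(m_i-\bar m_L)\partial_{x_l}\phi_i|_L=\partial_{x_l}m|_L$ (from $\sum_i\phi_i\equiv1$ differentiated), and this vector is orthogonal to $\partial_{x_l}m|_L\times\bar w_L$. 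The surviving $(w_i^h-\bar w_L)$ piece is bounded using $|m_i-\bar m_L|\le Ch|\nabla m|_L$, $|w_i^h-\bar w_L|\le Ch\|\nabla w\|_\infty$, and $|\partial_{x_l}\phi_i|_L|\le C/h$, giving $|R^{h,k}|\le Ch\|\nabla w\|_\infty\|\nabla m^{h,k}\|_{L^2(\Omega_T)}^2\to 0$. This cancellation is the main obstacle: a naive bound on $R^{h,k}$ is only $O(1)$, and the extra factor of $h$ must be extracted by combining the orthogonality $\partial_{x_l}m|_L\perp(\partial_{x_l}m|_L\times\bar w_L)$ with the differentiated partition-of-unity identity; without this it is not even clear that the error is bounded by anything better than the exchange energy itself.
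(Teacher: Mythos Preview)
Your argument is correct and ultimately rests on the same cancellation the paper uses, namely $\partial_{x_l}m^{h,k}\times\partial_{x_l}m^{h,k}=0$ on each element, but the two proofs package the remainder estimate differently. The paper recognizes the nodal sum as
\[
\sum_i (m_i^{h,k})^c(w_i^h)^a\,\partial_{x_l}\phi_i=\partial_{x_l}I_h\bigl((m^{h,k})^c(w^h)^a\bigr),
\]
so the whole left-hand side becomes $\int(\partial_{x_l}m^{h,k})^b\,\partial_{x_l}I_h(m^cw^a)$; it then replaces $\partial_{x_l}I_h(m^cw^a)$ by $\partial_{x_l}(m^cw^a)$ via the Bramble--Hilbert lemma (with an $O(h)$ error, after observing that $\Delta m^{h,k}=\Delta w^h=0$ on each element reduces the $H^2$-seminorm to $\nabla m^{h,k}\cdot\nabla w^h$), and the product rule on $\partial_{x_l}(m^cw^a)$ yields your main term plus the contribution that vanishes by antisymmetry of the cross product. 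Your element-wise splitting $w_i^h=\bar w_L+(w_i^h-\bar w_L)$ isolates precisely the same two pieces by hand: the $\bar w_L$ part is the vanishing $\partial_{x_l}m\times\partial_{x_l}m$ term (via the differentiated partition of unity), and the $(w_i^h-\bar w_L)$ part is the interpolation error that Bramble--Hilbert would otherwise bound, here estimated directly from $|m_i-\bar m_L|\le Ch|\nabla m^{h,k}|_L$, $|w_i^h-\bar w_L|\le Ch\|\nabla w\|_\infty$ and $|\partial_{x_l}\phi_i|\le C/h$. Your route is more elementary and makes the cancellation mechanism explicit; the paper's is shorter once the interpolation framework is in place, and has the advantage of reusing the same Bramble--Hilbert template across all the lemmas in this section.
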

\begin{proof} 
 The difference between the last two terms is bounded by 
\begin{equation}
\begin{aligned}\label{eq:gyro2}
    \left\vert\int_{\Omega_T} ( \partial_{x_l} {m^{h,k}})^{b}    \partial_{x_l} I_h((m^{h,k})^{c}(w^h) ^{a})\;  -
  \int_{\Omega_T}  ( \partial_{x_l} {m^{h,k}})^{b}   ((m^{h,k})^{c} (\partial_{x_l} w^h)^{a})\;  \right\vert \\
  +    \left\vert\int_{\Omega_T}  (m^{h,k})^{c}( \partial_{x_l} {m^{h,k}})^{b}   (\partial_{x_l} w^h)^{a} \; -
  \int_{\Omega_T}   m^{c}  (\partial_{x_l} m)^{b}  ( \partial_{x_l} w)^{a})\; \right\vert ,
\end{aligned}
\end{equation}
for some $a,b,c \in \{ 1,2,3 \}$. The first term is bounded by
\begin{equation}\label{eq:gyro3}
  \left\Vert ( \partial_{x_l} m^{h,k})^{b}\right\Vert_{L^2(\Omega _T)}
  \left\Vert \partial_{x_l} I_h((m^{h,k})^{c}(w^h)^{a}) -
  \partial_{x_l} ((m^{h,k})^{c}(w^h)^{a}) \right\Vert_{L^2(\Omega _T)}.
\end{equation}
For each element $L$, we have $m^{h,k}(\cdot, t) w (\cdot, t) \in C^{\infty}(L)$, and we have the estimate,
\begin{equation}
\begin{aligned}\label{eq:BH1}
  \left\Vert \partial_{x_l} I_h({(m^{h,k}})^{c}(w^h)^a) -\partial_{x_l}
  (({m^{h,k}})^{c}(w^h)^a) \right\Vert_{L^2(L)}^2 
\leq C_{17} h^2 |({m^{h,k}})^{c}(w^h)^a|_{H^2(L)}^2
\end{aligned}
\end{equation}
for some constant $C_{17}>0$, by the Bramble-Hilbert lemma. Moreover, we have the estimate, 
\begin{equation}
\begin{aligned}\label{eq:BH2}
|({m^{h,k}})^{c}(w^h)^a|_{H^2(L)}^2 = \int_L |\Delta ((m^{h,k})^{c}(w^h)^a)|^2 
  \leq C_{18}  \int_L |  \nabla (m^{h,k})^{c} |^2| \nabla (w^h)^a|^2\\
\leq C_{19} \left\Vert(m^{h,k})^c\right\Vert_{H^1(L)}^2
\end{aligned}
\end{equation}
for some constants $C_{18}, C_{19}>0$, since $\Delta m^{h,k} = 0 $ and $\Delta w^h =0$ in $L$, since $m^{h,k}$  and $w^h$ are the sum of piecewise linear functions. We get the estimate
\begin{equation}
\begin{aligned}
  \left\Vert \partial_{x_l} I_h({(m^{h,k}})^{c}(w^h)^a) -\partial_{x_l}
  (({m^{h,k}})^{c}(w^h)^a) \right\Vert_{L^2(\Omega_T)}^2 
   \leq C_{17}C_{19} h^2
  \left\Vert(m^{h,k})^c \right\Vert_{H^1(\Omega_T)}^2 .
\end{aligned}
\end{equation}
Therefore, we may conclude that the first term
of (\ref{eq:gyro2}) goes to 0 as $h,k \to 0$.
Moreover, the second term of  (\ref{eq:gyro2}) 
goes to $0$ by the weak convergence of $(\partial_{x_l} m^{h,k})^b$ to $(\partial_{x_l} m)^b $ and strong convergence of $(m^{h,k})^c$ to
$m^c$,  which gives (\ref{eq:weak}) and (\ref{eq:mhkstrongl2}).

\end{proof}

\begin{lemma} \label{lemma:gyrov}Under the same assumptions of Lemma \ref{lemma:vmt}, we have
\begin{equation} \label{eq:gyrov}
\begin{aligned}
  & \lim_{h,k \to 0} \left\vert k \sum_i \int_{\Omega_T} (m^{h,k}_i \times
  \partial_{x_l}\hat v^{h,k} )^a ( \partial_{x_l} w_i^h )^a  \right\vert =0.
\end{aligned}
\end{equation}
for $0 \leq \theta \leq 1$.
\end{lemma}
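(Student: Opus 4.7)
The strategy is to use Cauchy--Schwarz to factor $k$ out of the integral, pair the resulting cross product against a piecewise-constant test field whose $L^2(\Omega_T)$-norm is uniformly bounded by the energy estimate, and control the remaining $k\|\nabla\hat v^{h,k}\|_{L^2(\Omega_T)}$ via either the inverse inequality (\ref{eq:fix1}) or the implicit energy bound (\ref{eq:energyestimate2}), depending on $\theta$. Interpreting $\partial_{x_l} w_i^h$ as $w_i^h\partial_{x_l}\phi_i$ (as in (\ref{eq:approxLLx3})) and using the identity $(m_i\times \partial_{x_l}\hat v)\cdot w_i = \partial_{x_l}\hat v\cdot(w_i\times m_i)$, I rewrite the quantity in question as $k\int_{\Omega_T}\partial_{x_l}\hat v^{h,k}\cdot G^{h,k}$, where the piecewise constant field $G^{h,k}:=\sum_i(w_i^h\times m_i^{h,k})\partial_{x_l}\phi_i$ encodes all the information about the test function and current magnetization.

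The first main step is to show $\|G^{h,k}\|_{L^2(\Omega_T)}\leq C$. On each element $L$, the partition-of-unity identity $\sum_{i\in I_L}\partial_{x_l}\phi_i=0$ lets me subtract $w_{i_0}^h\times m_{i_0}^{h,k}$ for any reference vertex $i_0\in I_L$, reducing $|G^{h,k}|$ to a sum of differences $|w_i^h\times m_i^{h,k}-w_{i_0}^h\times m_{i_0}^{h,k}|\cdot |\partial_{x_l}\phi_i|$. These split into a Lipschitz contribution $|w_i^h-w_{i_0}^h|\leq Ch\|w\|_{C^1}$, which cancels the $1/h$ from $|\partial_{x_l}\phi_i|$, and a contribution bounded locally by $|\nabla m^{h,k}|_L$. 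Summing over elements and times and using the energy bound on $\|\nabla m^{h,k}\|_{L^2(\Omega_T)}$ yields $\|G^{h,k}\|_{L^2(\Omega_T)}\leq C(1+\|\nabla m^{h,k}\|_{L^2(\Omega_T)})\leq C$. Cauchy--Schwarz then reduces the problem to showing $k\|\nabla\hat v^{h,k}\|_{L^2(\Omega_T)}\to 0$.

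For $\theta\in[0,\tfrac12)$, the inverse inequality (\ref{eq:fix1}) applied in space and integrated in time gives $\|\nabla\hat v^{h,k}\|_{L^2(\Omega_T)}\leq\frac{\sqrt{C_7}}{h}\|\hat v^{h,k}\|_{L^2(\Omega_T)}\leq C/h$ by the energy bound (\ref{eq:energyestimate}), and the mesh assumption $k/h^2\leq C_0$ gives $k/h\leq C_0 h\to 0$. For $\theta\in(\tfrac12,1]$, the energy estimate (\ref{eq:energyestimate2}) yields $(\theta-\tfrac12)k\|\nabla\hat v^{h,k}\|_{L^2(\Omega_T)}^2\leq C$, so Cauchy--Schwarz gives $k\|\nabla\hat v^{h,k}\|_{L^2(\Omega_T)}\leq C\sqrt{k/(\theta-\tfrac12)}\to 0$. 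The main anticipated obstacle is the borderline case $\theta=\tfrac12$, where neither the mesh-ratio constraint nor the second-order energy bound is active; handling this case will likely require a separate argument exploiting that $k\hat v^{h,k}$ is essentially the nodal increment $m^{j+1}-m^j$ (cf.\ (\ref{eq:k1})--(\ref{eq:k2})), whose spatial gradient inherits compactness in $L^2(\Omega_T)$ from the uniform $H^1$-bound on $\bar m^{h,k}$.
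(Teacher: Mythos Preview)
Your approach is essentially the same as the paper's: write the sum as $k\int_{\Omega_T}\partial_{x_l}\hat v^{h,k}\cdot\partial_{x_l}I_h\big((m^{h,k})^b(w^h)^a\big)$, apply Cauchy--Schwarz, factor as $\sqrt{k}\cdot\|\sqrt{k}\,\nabla\hat v^{h,k}\|_{L^2(\Omega_T)}\cdot\|\nabla I_h((m^{h,k})^b(w^h)^a)\|_{L^2(\Omega_T)}$, and show the last two factors are uniformly bounded. The only cosmetic difference is that you bound the test factor $G^{h,k}=\partial_{x_l}I_h(w^h\times m^{h,k})$ by a direct partition-of-unity/difference argument, whereas the paper invokes the Bramble--Hilbert lemma to compare $\nabla I_h((m^{h,k})^b(w^h)^a)$ with $\nabla((m^{h,k})^b(w^h)^a)$; both routes yield the same bound $C(1+\|\nabla m^{h,k}\|_{L^2(\Omega_T)})$.

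Your observation about $\theta=\tfrac12$ is sharp. The paper simply asserts that $\|\sqrt{k}\,\nabla\hat v^{h,k}\|_{L^2(\Omega_T)}$ is bounded ``by equation (\ref{eq:energyestimate2}) for $\tfrac12\le\theta\le 1$'', but the coefficient $(\theta-\tfrac12)$ in (\ref{eq:energyestimate2}) vanishes at $\theta=\tfrac12$, so that estimate gives nothing there and no mesh-ratio constraint is assumed for $\theta\ge\tfrac12$. Your proposed workaround via $k\hat v_i^j\approx m_i^{j+1}-m_i^j$ does not close the gap either: the uniform bound $\|\nabla m^j\|_{L^2(\Omega)}\le C$ only gives $\sum_j k\|\nabla(m^{j+1}-m^j)\|_{L^2(\Omega)}^2\le CT$, i.e.\ boundedness of $\|k\nabla\hat v^{h,k}\|_{L^2(\Omega_T)}$, not decay. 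So at $\theta=\tfrac12$ both your argument and the paper's are incomplete as stated; away from $\theta=\tfrac12$ your proof is correct and matches the paper's.
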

\begin{proof}
 An upper bound for the sequence above
is
\begin{equation} \label{eq:kv}
\begin{aligned}
  &  \sqrt{k} \left\Vert\sqrt{k} \;\partial_{x_l} (\hat v^{h,k} )^{c}\right\Vert_{L^2(\Omega _T)}
  \left\Vert \nabla (I_h(  m^{h,k})^{b}   (w^h)^a ) \right\Vert_{L^2(\Omega _T)}.
\end{aligned} 
\end{equation}
for some $a,b,c \in \{ 1,2,3 \}$.  The term $\left\Vert\sqrt{k} \;\partial_{x_l} (\hat v^{h,k} )^{c}\right\Vert_{L^2(\Omega _T)}$ in  (\ref{eq:kv}) is uniformly bounded, since 
$ \left\Vert \sqrt{k} \partial_{x_l} (\hat v^{h,k} )^{c}\right\Vert_{L^2(\Omega) } \leq  C_7 \frac{\sqrt{k}}{h}\left\Vert(\hat v^{h,k} )^{c}\right\Vert_{L^2(\Omega) } $
is uniformly bounded  by (\ref{eq:energyestimate}) for $0
\leq \theta <\frac{1}{2}$, which is obtained by (\ref{eq:fix1}), 
and $ \left\Vert \sqrt{k} \partial_{x_l} (\hat v^{h,k} )^{c}\right\Vert_{L^2 (\Omega)}$ is uniformly bounded by equation
(\ref{eq:energyestimate2}) for $\frac{1}{2} \leq \theta \leq 1$.
For each element $L$, we have $m^{h,k}(\cdot, t) w (\cdot, t) \in C^{\infty}(L)$, so 
\begin{equation} 
\begin{aligned}
  & \left\Vert \nabla I_h((  m^{h,k})^{b}   (w^h)^a )- \nabla ((  m^{h,k})^{b}   (w^h)^a ) \right\Vert_{L^2(L)}^2 
\leq C_{20} h^2 (\left\Vert  ( \nabla (  m^{h,k})^{b}   ) \right\Vert_{L^2(L)}^2),
\end{aligned} 
\end{equation}
for some constant $C_{20}>0$, by the Bramble-Hilbert lemma, and using $\Delta m^{h,k} = 0 $ and $\Delta w^h =0$ in $L$, since $m^{h,k}$  and $w^h$ are the sum of piecewise linear functions.
Thus, we have
\begin{equation} 
\begin{aligned}
& \left\Vert \nabla (I_h(  m^{h,k})^{b}   (w^h)^a ) \right\Vert_{L^2(\Omega_T)}^2 \\
&\hspace{3cm}\leq \left\Vert \nabla ( m^{h,k})^{b}    \right\Vert_{L^2(\Omega_T)}^2+ C_{20} h^2 (\left\Vert
  ( \nabla (  m^{h,k})^{b}   ) \right\Vert_{L^2(\Omega_T)}^2 ),
\end{aligned} 
\end{equation}
which is uniformly bounded.
Hence, (\ref{eq:kv}) goes to $0$ as $h,k \to 0$.
\end{proof}

\begin{lemma} Under the same assumptions of Lemma \ref{lemma:vmt}, we have
\begin{equation}
\begin{aligned}
  \lim_{h,k \to 0}  \sum_i \int_{\Omega_T}
  (m_i^{h,k} \times \bar h (m^{h,k}) ) \cdot w_i^h  \phi_i  =
 \int_{\Omega_T}   (m \times \bar h (m) ) \cdot w .
\end{aligned}
 \end{equation}
\end{lemma}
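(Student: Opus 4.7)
The plan is to rewrite the sum using the cyclic symmetry of the scalar triple product as
\[
\sum_i (m_i^{h,k} \times \bar h(m^{h,k})) \cdot w_i^h\, \phi_i = \bar h(m^{h,k}) \cdot I_h(w^h \times m^{h,k}),
\]
so that the claim reduces to showing the right-hand side converges in $L^1(\Omega_T)$ to $\bar h(m) \cdot (w \times m)$, which by the same triple product identity equals $(m \times \bar h(m))\cdot w$. I would then split the difference by triangle inequality into an interpolation-error piece,
\[
\int_{\Omega_T} \bar h(m^{h,k}) \cdot \bigl[I_h(w^h \times m^{h,k}) - w^h \times m^{h,k}\bigr],
\]
and a convergence piece,
\[
\int_{\Omega_T} \bar h(m^{h,k}) \cdot (w^h \times m^{h,k}) \;-\; \int_{\Omega_T} \bar h(m) \cdot (w \times m).
\]

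For the interpolation-error piece I would apply Cauchy--Schwarz. The factor $\|\bar h(m^{h,k})\|_{L^2(\Omega_T)}$ is uniformly bounded by (\ref{eq:hbar}) and (\ref{eq:hbar2}) together with $|m_i^{h,k}|=1$. The other factor is controlled element by element via the Bramble--Hilbert lemma: since $w^h$ and $m^{h,k}$ are piecewise linear, $w^h \times m^{h,k}$ is piecewise quadratic with $|w^h \times m^{h,k}|_{H^2(L)}^2 \leq C\int_L |\nabla w^h|^2|\nabla m^{h,k}|^2$, so summing over $L$ and invoking the $L^\infty$ bound on $\nabla w$ (from smoothness of $w$) together with the uniform $L^2$ bound on $\nabla m^{h,k}$ from (\ref{eq:energyestimate}) or (\ref{eq:energyestimate2}) yields an $O(h^2)$ bound, exactly as in the proof of Lemma~\ref{lemma:gyro:old}. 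For the convergence piece I would add and subtract $\bar h(m) \cdot (w^h \times m^{h,k})$. Linearity of the anisotropy term and of the stray-field map, combined with (\ref{eq:hbar}) and (\ref{eq:hbar2}), give $\|\bar h(m^{h,k})-\bar h(m)\|_{L^2(\Omega_T)} \leq C\|m^{h,k}-m\|_{L^2(\Omega_T)} \to 0$ by (\ref{eq:mhkstrongl2}); combined with the uniform $L^2$ bound on $w^h \times m^{h,k}$, the first half vanishes. The remainder $\int \bar h(m)\cdot[(w^h \times m^{h,k})-(w \times m)]$ goes to $0$ by strong $L^2$ convergence of $m^{h,k}$ and uniform convergence $w^h \to w$ coming from $w \in C^\infty$.

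The main subtlety I anticipate is the nonlocal stray-field contribution $h_s(m)$ inside $\bar h$: pointwise convergence of $m^{h,k}$ is not enough to conclude $h_s(m^{h,k}) \to h_s(m)$, so the argument genuinely relies on the strong $L^2$ convergence (\ref{eq:mhkstrongl2}) together with the fact, recorded in (\ref{eq:hbar2}), that $h_s$ extends to a bounded linear operator on $L^2(\Omega)$. Once this continuity of $\bar h$ in $L^2$ is in hand, every other step is a direct adaptation of the techniques already used in the preceding lemmas, and no new ideas are required.
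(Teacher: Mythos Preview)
Your proposal is correct and follows essentially the same route as the paper's own proof: rewrite the sum via the scalar triple product so that $\bar h(m^{h,k})$ is paired against $I_h$ of a product of piecewise-linear functions, then split into an interpolation-error term (handled by Cauchy--Schwarz and Bramble--Hilbert, using that second derivatives of $m^{h,k}$ and $w^h$ vanish elementwise) and a convergence term (handled by strong $L^2$ convergence of both $m^{h,k}$ and $\bar h(m^{h,k})$). The paper carries this out componentwise, writing $(\bar h(m^{h,k}))^a I_h((m^{h,k})^b(w^h)^c)$ rather than your vectorial $\bar h(m^{h,k})\cdot I_h(w^h\times m^{h,k})$, and obtains the sharper $O(h^2)$ bound for $\|I_h(\cdot)-(\cdot)\|_{L^2}$ from the $L^2$ version of Bramble--Hilbert (your reference to Lemma~\ref{lemma:gyro:old} points to the $H^1$ version, which would still suffice); your explicit justification of the strong convergence $\bar h(m^{h,k})\to\bar h(m)$ via the affine structure of $\bar h$ and (\ref{eq:hbar2}) is a point the paper asserts without elaboration.
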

\begin{proof}An upper bound for the difference between the sequence and the limit 
is given by
\begin{equation} \label{eq:gyroh}
\begin{aligned}
  \left\vert \int_{\Omega_T}    (\bar h (m^{h,k}) )^a I_h((m^{h,k})^b  (w^h)^c) -   \int_{\Omega_T}
 ( \bar h (m^{h,k}) )^a (m^{h,k})^b  (w^h)^c) \right\vert\\
  +\left\vert \int_{\Omega_T}    (\bar h (m^{h,k}) )^a (m^{h,k})^b  (w^h)^c     -  \int_{\Omega_T}
  (\bar h (m) )^a m^b  w^c)    \right\vert
\end{aligned}
 \end{equation}
 for some $a,b,c \in \{1,2,3 \}$.
 The first term of (\ref{eq:gyroh}) is bounded by
\begin{equation} 
\begin{aligned}
  \left\Vert \bar h (m^{h,k})^a \right\Vert_{L^2 (\Omega _T )} \left\Vert I_h((m^{h,k})^b  (w^h)^c) -
  (m^{h,k})^b  (w^h)^c\right\Vert_{L^2 (\Omega _T )}
\end{aligned}
 \end{equation}
For each element $L$, we have $m^{h,k}( \cdot, t) w( \cdot, t) \in C^{\infty} (L)$, and we get the estimate,
\begin{equation}
\begin{aligned}\label{eq:fix3}
 \left\Vert I_h((m^{h,k})^{b}w^{c}) - ((m^{h,k})^{b}w^{c}) \right\Vert_{L^2(L)}^2 
\leq C_{21} h^4 |(m^{h,k})^{b}w^{c}|_{H^2(L)}^2
\end{aligned}
\end{equation}
for some constant $C_{21}>0$, by the Bramble-Hilbert lemma. Moreover, 
\begin{equation}
\begin{aligned}\label{eq:fix4}
  |(m^{h,k})^{b}w^{c}|_{H^2(L)}^2 
    & \leq C_{21} \int_L |  \nabla (m^{h,k})^{b} |^2| \nabla (w^h)^c|^2+|(m^{h,k})^b|^2| \Delta (w^h)^c|^2\\
 &\leq C_{22} \left\Vert(m^{h,k})^b\right\Vert_{H^1(L)}^2
\end{aligned}
\end{equation}
for some constant $C_{22} >0$, and using the fact $\Delta m^{h,k} =\Delta w^h =0$ in $L$, since $m^{h,k}$  and $w^h$ are the sum of piecewise linear functions. We get the estimate
\begin{equation}
\begin{aligned}\label{eq:fix5}
  \left\Vert I_h((m^{h,k})^{b}w^{c}) - ((m^{h,k})^{b}w^{c}) \right\Vert_{L^2(\Omega _T)}^2 \leq
  C_{23} h^4 \left\Vert(m^{h,k})^b\right\Vert_{H^1(\Omega _T)}^2.
\end{aligned}
\end{equation}
for some constant $C_{23} >0$.  Thus, the first term of (\ref{eq:gyroh})
goes to $0$ as $h,k \to 0$, and the second term of (\ref{eq:gyroh}) converges to $0$ as $h,k \to 0$,
because of the strong convergence of $(\bar h( m^{h,k}))^a$ and
$(m^{h,k})^b$.
\end{proof}
\begin{lemma} Under the same assumptions of Lemma \ref{lemma:vmt}, we have
\begin{equation}
\begin{aligned}
  \lim_{h,k \to 0}  \left\vert k \sum_i \int_{\Omega_T}   (m_i^{h,k} \times \bar h
  (\hat v^{h,k}) ) \cdot w_i^h  \phi_i   \right\vert =0.
\end{aligned}
 \end{equation}
\end{lemma}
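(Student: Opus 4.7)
The plan is to show the quantity is bounded by a constant times $k$, and hence vanishes in the limit, exactly as in Lemma \ref{lemma:gyrov}, but exploiting the $L^2$-to-$L^2$ continuity of $\bar h$ (from inequalities (\ref{eq:hbar})--(\ref{eq:hbar2})) in place of the inverse inequality (\ref{eq:fix1}) that was used there. No interpolation error analysis is required because we are only asked to prove that the whole expression vanishes, not that it converges to a specific nonzero limit.

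First I would use the cyclic property of the scalar triple product to rewrite
\begin{equation*}
\sum_i (m_i^{h,k} \times \bar h(\hat v^{h,k})) \cdot w_i^h\,\phi_i(x)
 \;=\; \bar h(\hat v^{h,k})(x,t) \cdot \psi^{h,k}(x,t),
\end{equation*}
where $\psi^{h,k}(x,t) := \sum_i (w_i^h(t) \times m_i^{h,k}(t))\,\phi_i(x)$ belongs to $F^h$ pointwise in $t$. Because $|m_i^{h,k}| = 1$ and the nodal basis functions form a partition of unity with $\phi_i\ge 0$, we get the pointwise bound
\begin{equation*}
 |\psi^{h,k}(x,t)| \;\le\; \|w\|_{L^\infty(\Omega_T)} \sum_i \phi_i(x) \;=\; \|w\|_{L^\infty(\Omega_T)},
\end{equation*}
so $\|\psi^{h,k}\|_{L^2(\Omega_T)}$ is uniformly bounded in $h,k$ by a constant depending only on $w$ and $|\Omega_T|$.

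Next I would apply the Cauchy--Schwarz inequality to obtain
\begin{equation*}
\left| k\sum_i \int_{\Omega_T}(m_i^{h,k}\times \bar h(\hat v^{h,k}))\cdot w_i^h\,\phi_i \right|
\;\le\; k\, \|\bar h(\hat v^{h,k})\|_{L^2(\Omega_T)}\,\|\psi^{h,k}\|_{L^2(\Omega_T)}.
\end{equation*}
The operator $\bar h$ is built out of the pointwise multiplication by $-Q(\cdot_2 e_2+\cdot_3 e_3)$, the stray-field operator $h_s$, and an additive constant $h_e$; each of these is bounded from $L^2$ to $L^2$, so inequalities (\ref{eq:hbar})--(\ref{eq:hbar2}) apply with $\hat v^{h,k}$ in place of $m$ to give $\|\bar h(\hat v^{h,k})\|_{L^2(\Omega_T)}\le C_5\|\hat v^{h,k}\|_{L^2(\Omega_T)} + C_5'$ for some constant $C_5'$ depending on $h_e$ and $|\Omega_T|$.

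Finally I would invoke the energy inequalities (\ref{eq:energyestimate}) and (\ref{eq:energyestimate2}) proved in Section \ref{subsec:energy}, which give a uniform bound on $\|\hat v^{h,k}\|_{L^2(\Omega_T)}$ under the stated assumptions on $\theta$ and $k/h^2$. Combining these bounds yields
\begin{equation*}
\left| k\sum_i \int_{\Omega_T}(m_i^{h,k}\times \bar h(\hat v^{h,k}))\cdot w_i^h\,\phi_i \right| \;\le\; C\,k,
\end{equation*}
which tends to $0$ as $k\to 0$. The only step requiring a moment of care is justifying the use of (\ref{eq:hbar})--(\ref{eq:hbar2}) on $\hat v^{h,k}$, which is not a magnetization of unit length; this is straightforward because those inequalities are simply $L^2$ mapping bounds on the linear operator $\bar h - h_e$, and not specific to magnetizations, so no genuine obstacle arises.
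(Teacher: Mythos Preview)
Your proposal is correct and follows essentially the same approach as the paper: bound the expression by $k\,\|\bar h(\hat v^{h,k})\|_{L^2(\Omega_T)}$ times the $L^2$-norm of a uniformly bounded test function, then invoke (\ref{eq:hbar}) and the energy estimates (\ref{eq:energyestimate})--(\ref{eq:energyestimate2}) to conclude the right-hand side is $O(k)$. Your explicit introduction of $\psi^{h,k}$ and the pointwise bound via the partition of unity is a slightly more careful version of what the paper records as the single inequality $k\|\bar h(\hat v^{h,k})\|_{L^2(\Omega_T)}\|w^h\|_{L^2(\Omega_T)}$.
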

\begin{proof}
An upper bound for the sequence above
 is
\begin{equation}
\begin{aligned} \label{eq:khw}
  k \left\Vert \bar h (\hat v^{h,k}) )\right\Vert_{L^2(\Omega_T )}\left\Vert w^h \right\Vert_{L^2(\Omega_T )}.  
\end{aligned}
 \end{equation}
Since, 
$
  \left\Vert \bar h (\hat v^{h,k}) )\right\Vert_{L^2(\Omega_T )}  \leq ( C_5\left\Vert \hat v^{h,k}\right\Vert_{L^2(\Omega_T)} + C_5)
$ by  (\ref{eq:hbar}),  the term $\left\Vert \bar h (\hat v^{h,k}) )\right\Vert_{L^2(\Omega_T)}$ in (\ref{eq:khw}) is uniformly bounded. Therefore, (\ref{eq:khw}) goes to $0$ as $h,k \to 0$.

\end{proof}
%

\subsection{Energy of $m$}
\label{subsection:energy}
Recall the definition of the energy $\mathcal{E}(m)$  in  (\ref{eq:LLenergydef}). 
We  follow the same arguments in section 6 of \cite{alouges2014convergent}. 
We have an energy estimate  of $m^{h,k}$ as
\begin{equation}\label{energy:main1}
\begin{aligned}
  \mathcal{E}(m^{j+1}) - \mathcal{E}(m^{j}) \leq &	 - k (\frac{\alpha}{1+\alpha^2} )
  \frac{C_1}{C_6}|| \hat v^j||_{L^2}^2 - (\theta-\frac{1}{2}) k^2 \eta|| \nabla \hat v^j||_{L^2}^2+
  k( \bar h(m^j), \hat v^j)  \\
  & +  \theta k^2 ( \bar h_e, \hat v^j)- \frac{1}{2} \int_\Omega (\bar h (m^{j+1} ) + \bar h (m^j)  )\cdot (m^{j+1}-m^j) .
\end{aligned}
\end{equation}
by (\ref{eq:energyinequality}) from section \ref{subsec:energy}. 
For $0\leq \theta  <\frac{1}{2}$, the second term on the right has an upper bound
\begin{equation}
(\theta-\frac{1}{2}) k^2 \eta \left\Vert \nabla \hat v^j\right\Vert_{L^2(\Omega) }^2 \ \leq  k^2 \eta \left\Vert \nabla \hat v^j \right\Vert_{L^2(\Omega) }^2 \leq C_7 k \eta\frac{ k }{h^2} \left\Vert  \hat v^j \right\Vert_{L^2(\Omega) }^2 \leq C_7C_0 \eta k \left\Vert  \hat v^j \right\Vert_{L^2(\Omega) }^2
\end{equation}
and by choosing $ C_0 \leq \frac{1}{2} \frac{\alpha }{1+\alpha^2}\frac{C_1}{C_6} \frac{1}{C_7 \eta}$, this term and the first term on the right hand side of (\ref{energy:main1}) can be combined to be less than equal to 
\begin{equation}
 -\frac{k}{2}(\frac{\alpha}{1+\alpha^2} )  \frac{C_1}{C_6}|| \hat v^j||_{L^2(\Omega)}^2 
 \end{equation}
 The second term on the right of equation(\ref{energy:main1}) can be disregarded for $\frac{1}{2} \leq \theta \leq 1$.
We will derive the upper bound for the rest of the terms of right hand side of  (\ref{energy:main1}). 
\par The third and the last terms on the right can be combined to be written as
\begin{equation}
\begin{aligned}
& \left\vert  k( \bar h(m^j), \hat v^j) - \frac{1}{2} \int_\Omega (\bar h (m^{j+1})+\bar h (m^j)) \cdot (m^{j+1}-m^j)  \right \vert .
 \end{aligned}
\end{equation}
and has an upper bound
\begin{equation}\label{eq:up1}
\begin{aligned}
& \left\vert \int_\Omega \bar h(m^j)  \cdot (m^{j+1}-m^j- k \hat{v}^j ) \right\vert  + \left\vert \frac{1}{2} \int_\Omega (\bar h (m^{j+1})-\bar h (m^j)) \cdot (m^{j+1}-m^j)  \right\vert.
 \end{aligned}
\end{equation}
The first term of (\ref{eq:up1})  is bounded by 
\begin{equation}
\begin{aligned}
 C_{24} k^2  \left(\left\Vert  \hat v^j \right\Vert_{L^2(\Omega)} \left\Vert  \hat v^j \right\Vert_{L^4(\Omega)} \right)  \leq  C_{24} \frac{k^2}{2}  \left(\left\Vert  \hat v^j \right\Vert_{L^2(\Omega)}^2+ \left\Vert  \hat v^j \right\Vert_{L^4(\Omega)}^2\right) 
 \end{aligned}
\end{equation}
for some constant $C_{24}>0$,  by (\ref{eq:k2}), and (\ref{eq:fix0}).
The second term of (\ref{eq:up1})  is bounded by 
$C_{25} k^2  \left\Vert  \hat v^j \right\Vert^2_{L^2(\Omega)}$
 for some constant $C_{25}>0$, by (\ref{eq:k1}) and (\ref{eq:fix0}).

The fourth term on the right has the upper bound
$|  \theta k^2 ( \bar h_e, \hat v^j) | \leq C_{26} k^2   \left\Vert \hat v^j\right\Vert_{L^2(\Omega)}$
for some constant $C_{26} >0$.
Then (\ref{energy:main1}) has an upper bound
\begin{equation}
\begin{aligned}
  \mathcal{E}(m^{j+1}) - \mathcal{E}(m^{j})+ \frac{k}{2} \left(\frac{\alpha}{1+\alpha^2} \right)  \frac{C_1}{C_6}|| \hat v^j||_{L^2(\Omega)}^2    \leq &  C_{27} k^2  \left(\left\Vert  \hat v^j \right\Vert_{L^4(\Omega)}^2+\left\Vert  \hat v^j \right\Vert^2_{L^2(\Omega)} \right) \\
  \leq & C_{28} k^2  \left( \left\Vert \nabla \hat v^j \right\Vert_{L^2(\Omega)}^2 +\left\Vert  \hat v^j \right\Vert^2_{L^2(\Omega)} \right)
\end{aligned}
\end{equation}
for some constants $C_{27}, C_{28} >0$, by using Sobolev embedding theorem \cite{adams2003sobolev},
$\left\Vert  \hat v^j \right\Vert_{L^4(\Omega)} \leq C_{29} \left\Vert \nabla \hat v^j \right\Vert_{L^2(\Omega)}$
for some constant $C_{29}>0$. Summing from $j=0, \dots, J-1$, we get
\begin{equation}
\begin{aligned}
 & \mathcal{E}(m^{J}) - \mathcal{E}(m^{0})+  \frac{1}{2} \left(\frac{\alpha}{1+\alpha^2} \right)  \frac{C_1}{C_6}\int_{\Omega_T} |\hat v^{h,k}|^2    \\
  &\hspace{3cm} \leq  C_{28} \;k  \left(\left\Vert \nabla \hat v^{h,k} \right\Vert_{L^2(\Omega _T) }^2 +\left\Vert  \hat v^{h,k} \right\Vert^2_{L^2(\Omega _T)}  \right)
\end{aligned}
\end{equation}
Therefore, taking $h, k \to 0$, we get
the energy inequality (\ref{eq:energyiinequalitydef}).
 
\subsection{Magnitude of $m$}
\label{subsection:magnitude}
By the same argument in  \cite{alouges2012convergent}, we have $|m(x,t)|=1$ $a.e.$ for $(x,t)\in \Omega _T$ ( See equation (28) and (29) on page 1347 of  \cite{alouges2012convergent} ).
%
%

\section{Conclusion}
We have presented a mass-lumped finite element method for the Landau-Lifshitz equation.
We showed that the numerical solution of our method has a subsequence that converges weakly to a weak solution of the Landau-Lifshitz-Gilbert equation.
Numerical tests show that the method is second order accurate in space and first order accurate in time
when the underlying solution is smooth. A second-order in time variant was also presented and tested
numerically, but not analyzed rigorously in the present work.
 \vspace*{-.3cm}

\bibliography{refs}

\end{document}